\let\originalforall=\forall
\renewcommand{\forall}{\mathop{\vcenter{\hbox{\Large$\originalforall$}}}}
\let\originalexists=\exists
\renewcommand{\exists}{\mathop{\vcenter{\hbox{\Large$\originalexists$}}}}
\newtheorem{de}{Definition}
\newtheorem{fakt}[de]{Fact}
\newtheorem{tw}[de]{Theorem}
\newtheorem{prop}[de]{Proposition}
\newtheorem{lem}[de]{Lemma}
\newtheorem{cor}[de]{Corollary}
\date{}
\title{Spectrally reasonable measures}
\author{Przemysław Ohrysko\thanks{The research of this author has been supported by National Science Centre, Poland grant no. 2014/15/N/ST1/02124}\\
Institute of Mathematics, Polish Academy of Sciences\\
00-956 Warszawa, Poland\\
E-mail: p.ohrysko@gmail.com \and
Michał Wojciechowski\\
Institute of Mathematics, Polish Academy of Sciences\\
00-956 Warszawa, Poland\\
E-mail: miwoj-impan@o2.pl}
\begin{document}

\title{Spectrally reasonable measures}

\maketitle
\renewcommand{\thefootnote}{}

\footnote{2010 \emph{Mathematics Subject Classification}: Primary 43A10; Secondary 43A25.}

\footnote{\emph{Key words and phrases}: Natural spectrum, Wiener - Pitt phenomenon, Fourier - Stieltjes coefficients, convolution algebra, spectrum of measure.}

\renewcommand{\thefootnote}{\arabic{footnote}}
\setcounter{footnote}{0}
\begin{abstract}
In this paper we investigate the problems related to measures with a natural spectrum (equal to the closure of the set of the values of the Fourier-Stieltjes transform). Since it is known that the set of all such measures does not have a Banach algebra structure we consider the set of all suitable perturbations called spectrally reasonable measures. In particular, we exhibit a broad class of spectrally reasonable measures which contains absolutely continuous ones. On the other hand, we show that except trivial cases all discrete (purely atomic) measures do not posses this property.
\end{abstract}
\section{Introduction}
We consider the Banach algebra $M(\mathbb{T})$ of complex Borel
regular measures on the circle group with $M_{d}(\mathbb{T})$ as a
closed subalgebra consisting of all discrete (purely atomic
measures). Here $\mathbb{T}$ is the circle group identified with the quotient group $\mathbb{R}/2\pi\mathbb{Z}$. Let $\mathfrak{M}(M(\mathbb{T}))$ denote the maximal
ideal space of $M(\mathbb{T})$, $\sigma(\mu)$ the spectrum of
$\mu\in M(\mathbb{T})$, $r(\mu)$ its spectral radius and
$\widehat{\mu}$ the Gelfand transform of $\mu$ (for basic facts in
commutative harmonic analysis and Banach algebras see for example
$\cite{katz}$ and $\cite{z}$) . We will treat the Fourier-Stieltjes transform of a measure as a restriction of its Gelfand
transform to $\mathbb{Z}$ and so $\widehat{\mu}(n)$ will stand for
the $n$-th Fourier-Stieltjes coefficient ($n\in\mathbb{Z}$). It
is an elementary fact that for all $n\in\mathbb{Z}$ we have
$\widehat{\mu}(n)\in\sigma(\mu)$ and since the spectrum of an
element in a unital Banach algebra is closed we have an inclusion
$\overline{\widehat{\mu}(\mathbb{Z})}\subset\sigma(\mu)$. However,
as was first observed by Wiener and Pitt in $\cite{wp}$ this
inclusion may be strict and this interesting spectral behavior is
now known as the Wiener-Pitt phenomenon (for the first precise
proof of the existence of the Wiener-Pitt phenomenon see
$\cite{schreider}$, for the most general result consult
$\cite{wil}$ and $\cite{r}$, an alternative approach is presented
in $\cite{graham}$). Therefore, after M. Zafran (see
$\cite{Zafran}$), we introduce the following notion.
\begin{de}
We say that a measure $\mu\in M(\mathbb{T})$ has a natural
spectrum, if
\begin{equation*}
\sigma(\mu)=\overline{\widehat{\mu}(\mathbb{Z})}=\overline{\{\widehat{\mu}(n):n\in\mathbb{Z}\}}.
\end{equation*}
The set of all such measures will be denoted by
$\mathscr{N}$.
\end{de}
It is proved in $\cite{Zafran}$ that this set is not closed under addition. This may be also deduced from the following result of Hatori and Sato (for the general proof for any compact abelian group see $\cite{hs}$, the elementary proof for the circle group is presented in $\cite{o}$), since $M_{d}(\mathbb{T})\subset\mathscr{N}$ and otherwise (if we have assumed that the set $\mathscr{N}$ is closed under addition) every measure would have a natural spectrum. In the formulation of Theorem $\ref{hs}$ the abbrevation $M(G)$ stands for the measure algebra on $G$, $M_{d}(G)$ denotes its closed subalgebra consisting of discrete measures and $\mathscr{N}(G)$ is the set of all measures with a natural spectrum in $M(G)$, i.e. the set of all $\mu\in M(G)$ with $\sigma(\mu)=\overline{\{\widehat{\mu}(\gamma):\gamma\in\Gamma\}}$ where $\Gamma$ is a dual group of $G$.
\begin{tw}[Hatori,Sato]\label{hs}
Let $G$ be a compact abelian group. Then
\begin{equation*}
M(G)=\mathscr{N}(G)+\mathscr{N}(G)+M_{d}(G).
\end{equation*}
\end{tw}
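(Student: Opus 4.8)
The plan is to reduce the statement to a claim about continuous measures and then to produce the two natural-spectrum summands by a construction that forces the absence of the Wiener--Pitt phenomenon. Since $M(G)=M_{c}(G)\oplus M_{d}(G)$, where $M_{c}(G)$ is the closed subspace of continuous measures, and since $M_{d}(G)\subset\mathscr{N}(G)$ is already available, it suffices to show $M_{c}(G)\subset\mathscr{N}(G)+\mathscr{N}(G)$: writing $\mu=\mu_{c}+\mu_{d}$ with $\mu_{c}$ continuous and $\mu_{d}$ discrete, a decomposition $\mu_{c}=\nu_{1}+\nu_{2}$ with $\nu_{1},\nu_{2}\in\mathscr{N}(G)$ yields $\mu=\nu_{1}+\nu_{2}+\mu_{d}$, which is exactly the asserted form. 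I would record at the outset the two elementary membership facts that anchor the whole argument: $L^{1}(G)\subset\mathscr{N}(G)$ (because $L^{1}(G)$ is a regular closed ideal of $M(G)$ with maximal ideal space $\Gamma$, so $\sigma(f)=\overline{\widehat{f}(\Gamma)}=\widehat{f}(\Gamma)\cup\{0\}$) and $M_{d}(G)\subset\mathscr{N}(G)$.

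The core step, $M_{c}(G)\subset\mathscr{N}(G)+\mathscr{N}(G)$, is where a sufficient condition for natural spectrum must be isolated. The route I would take is to seek, for a given continuous measure $\omega$, a splitting $\omega=\nu_{1}+\nu_{2}$ in which each summand $\nu_{j}$ generates, together with the unit $\delta_{e}$, a closed subalgebra $A_{j}\subset M(G)$ whose Gelfand theory is transparent enough that $\sigma_{M(G)}(\nu_{j})=\sigma_{A_{j}}(\nu_{j})=\overline{\widehat{\nu_{j}}(\Gamma)}$. Concretely, I would try to arrange each $\nu_{j}$ to be a continuous measure for which the generated subalgebra is \emph{symmetric}, in the sense that the spectrum of every self-adjoint element (with respect to the involution $\widehat{\mu^{*}}=\overline{\widehat{\mu}}$) is real; for a symmetric commutative Banach algebra the maximal ideal space is controlled by its Hermitian elements, and one can then match the characters of $A_{j}$ with limits of the evaluations $\widehat{\nu_{j}}(\gamma)$, $\gamma\in\Gamma$, which is precisely natural spectrum. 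The existence of such a splitting I would attempt via an \emph{independence} construction: decomposing $\omega$ into pieces carried by mutually singular, sufficiently dissociate sets, so that the generated subalgebras are isometrically modelled on algebras with known, exhausted spectra.

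The main obstacle is exactly this last point: verifying that the two summands have natural spectrum, i.e.\ ruling out the Wiener--Pitt phenomenon for the constructed pieces. The two difficulties I expect are (i) upgrading a spectral-radius identity $r(\nu_{j})=\sup_{\gamma\in\Gamma}|\widehat{\nu_{j}}(\gamma)|$ to the full set equality $\sigma(\nu_{j})=\overline{\widehat{\nu_{j}}(\Gamma)}$, which requires either a holomorphic functional calculus argument applied to $\nu_{j}$ or a direct symmetry statement for $A_{j}$; and (ii) carrying out the construction for an arbitrary compact abelian $G$, where $\Gamma$ carries no order and one cannot lean on the analytic structure (Riesz projection, the F.\ and M.\ Riesz theorem) available for $\mathbb{T}$ with $\Gamma=\mathbb{Z}$, which is what the elementary proof of $\cite{o}$ exploits. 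I would therefore expect the heart of the work to be the structural lemma identifying a concrete, robust sufficient condition for natural spectrum that is stable under the splitting, with the reduction in the first paragraph being routine.
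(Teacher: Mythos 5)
This theorem is not proved in the paper at all: it is quoted from Hatori--Sato \cite{hs} (with an elementary circle-group proof in \cite{o}), so your attempt must be measured against those arguments, whose key mechanism the present paper isolates as Proposition \ref{dn} (``inspired by the results of Hatori and Sato''). Measured that way, your text is a programme rather than a proof: the decisive step --- actually producing the splitting and verifying that the summands have natural spectrum --- is explicitly deferred (``I would attempt'', ``I would expect the heart of the work to be\ldots''), and nothing is constructed or checked. The outer reduction ($M(G)=M_{c}(G)\oplus M_{d}(G)$, $M_{d}(G)\subset\mathscr{N}(G)$, $L^{1}(G)\subset\mathscr{N}(G)$) is correct but routine, and in fact $L^{1}(G)$ plays no role here: it enters Hatori's \emph{non-compact} decomposition \cite{h}, not this one.

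Beyond incompleteness, the route you sketch has two concrete defects. First, reducing to $M_{c}(G)\subset\mathscr{N}(G)+\mathscr{N}(G)$ is a strictly stronger claim than the theorem, and it is not what the known proof delivers: there the two natural-spectrum summands are \emph{not} continuous. One splits $\mu$ by convolving with idempotents such as $\frac{\delta_{0}\pm\delta_{\pi}}{2}$ and adds to each piece a scaled \emph{discrete} measure (Dirac deltas at rationally independent points, or Haar measures of finite subgroups) whose Fourier--Stieltjes transform is dense in a disc $r\overline{\mathbb{D}}$ and which is annihilated under convolution by the complementary piece; Proposition \ref{dn} (zero convolution plus full-disc range forces natural spectrum of the sum) then applies, and the $M_{d}(G)$ term exists precisely to subtract these discrete auxiliaries at the end. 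Your ``dissociate splitting'' produces neither the zero-convolution relation nor any control of $\sigma(\nu_{j})$, so the engine of the proof is missing. Second, the sufficient condition you propose --- symmetry of the generated subalgebra, i.e.\ real spectra of hermitian elements --- cannot serve as a criterion for natural spectrum: Parreau's theorem, quoted in this very paper as Theorem \ref{par}, gives $\tau\in M(\mathbb{T})$ with $\sigma(\tau)\subset\mathbb{R}$ but $\tau\notin\mathscr{N}$, so symmetric spectral behaviour does not exclude the Wiener--Pitt phenomenon. (A minor point: your concern that the circle-group proof leans on analytic structure like the Riesz projection is misplaced; the argument of \cite{o} uses only Kronecker's theorem and elementary convolution identities, which is why it adapts to general compact abelian $G$.)
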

In the following proposition we obtain few properties of $\mathscr{N}$ which will be used later (the algebra $M(\mathbb{T})$ is endowed with a standard involution defined for every $\mu\in M(\mathbb{T})$ as $\widetilde{\mu}(E):=\overline{\mu(E)}$, where $E\subset\mathbb{T}$ is a Borel set).
\begin{prop}\label{mon}
The set of all measures with natural spectra is involutive ($\mu\in\mathscr{N}$ $\Rightarrow$ $\widetilde{\mu}\in\mathscr{N}$), closed subset of $M(\mathbb{T})$ which is also closed under multiplication by complex numbers. Moreover, this set is closed under an action of the functional calculus i. e. for $\mu\in\mathscr{N}$ and a holomorphic function defined on some open neighborhood of $\sigma(\mu)$ we have $f(\mu)\in\mathscr{N}$.
\end{prop}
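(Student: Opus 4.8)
The starting point in every part is the elementary inclusion $\overline{\widehat{\mu}(\mathbb{Z})}\subset\sigma(\mu)$ recalled in the introduction, so in each case it suffices to prove the reverse inclusion $\sigma(\mu)\subset\overline{\widehat{\mu}(\mathbb{Z})}$. For invariance under the involution I would first record the one-line computation $\widehat{\widetilde{\mu}}(n)=\overline{\widehat{\mu}(-n)}$ coming straight from the definition of $\widetilde{\mu}$; hence the set $\widehat{\widetilde{\mu}}(\mathbb{Z})$ is the complex conjugate of $\widehat{\mu}(\mathbb{Z})$. On the other hand, since $M(\mathbb{T})$ is a commutative unital Banach $*$-algebra and the involution is conjugate-linear and preserves the unit, $\mu-\lambda$ is invertible if and only if $\widetilde{\mu}-\overline{\lambda}$ is, which gives $\sigma(\widetilde{\mu})=\{\overline{z}:z\in\sigma(\mu)\}$. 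As complex conjugation is a homeomorphism and therefore commutes with closures, feeding $\mu\in\mathscr{N}$ into these two facts yields $\widetilde{\mu}\in\mathscr{N}$. Closedness under multiplication by a scalar $\lambda$ is simpler still: $\widehat{\lambda\mu}(\mathbb{Z})=\lambda\widehat{\mu}(\mathbb{Z})$ and $\sigma(\lambda\mu)=\lambda\sigma(\mu)$, and for $\lambda\neq 0$ multiplication by $\lambda$ is a homeomorphism of $\mathbb{C}$ (the case $\lambda=0$ being trivial).

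The topological closedness of $\mathscr{N}$ is the heart of the matter, and it is precisely here that the naive approach fails: the Wiener--Pitt phenomenon means that even when $\lambda$ stays at a fixed positive distance from $\overline{\widehat{\mu_k}(\mathbb{Z})}$, the resolvent norms $\|(\mu_k-\lambda)^{-1}\|$ may blow up, so one cannot pass to the limit in the resolvent. I would instead work on the level of characters. Assume $\mu_k\to\mu$ in norm with all $\mu_k\in\mathscr{N}$, and fix $\lambda\in\sigma(\mu)$. Since $M(\mathbb{T})$ is commutative and unital, the spectrum is the set of values of the Gelfand transform, so there is a character $\phi\in\mathfrak{M}(M(\mathbb{T}))$ with $\phi(\mu)=\lambda$; as characters have norm one, $\phi(\mu_k)\to\phi(\mu)=\lambda$. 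Now $\phi(\mu_k)\in\sigma(\mu_k)=\overline{\widehat{\mu_k}(\mathbb{Z})}$, and the uniform bound $|\widehat{\mu_k}(n)-\widehat{\mu}(n)|\leq\|\mu_k-\mu\|$ valid for every $n$ shows that each point of $\overline{\widehat{\mu_k}(\mathbb{Z})}$ lies within $\|\mu_k-\mu\|$ of $\overline{\widehat{\mu}(\mathbb{Z})}$. Letting $k\to\infty$ forces $\lambda\in\overline{\widehat{\mu}(\mathbb{Z})}$, the required inclusion. The point that circumvents the resolvent difficulty is exactly that it suffices to control one continuous functional at a time, rather than the norm of an inverse.

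Finally, for the functional calculus I would combine the spectral mapping theorem with the multiplicativity of the Gelfand transform. For $f$ holomorphic on a neighbourhood of $\sigma(\mu)$ one has $\sigma(f(\mu))=f(\sigma(\mu))$, while the Gelfand transform of $f(\mu)$ is $f\circ\widehat{\mu}$; in particular $\widehat{f(\mu)}(n)=f(\widehat{\mu}(n))$ for all $n\in\mathbb{Z}$. Using $\mu\in\mathscr{N}$ we obtain $\sigma(f(\mu))=f\bigl(\overline{\widehat{\mu}(\mathbb{Z})}\bigr)$ and $\overline{\widehat{f(\mu)}(\mathbb{Z})}=\overline{f\bigl(\widehat{\mu}(\mathbb{Z})\bigr)}$, so the only remaining verification is the identity $f\bigl(\overline{\widehat{\mu}(\mathbb{Z})}\bigr)=\overline{f\bigl(\widehat{\mu}(\mathbb{Z})\bigr)}$. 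This holds because $\overline{\widehat{\mu}(\mathbb{Z})}$ is compact and $f$ is continuous on it, whence the continuous image of the closure equals the closure of the image. This gives $f(\mu)\in\mathscr{N}$ and finishes the proposition. Among the four assertions I expect the closedness to be the genuine obstacle, and the character argument above is the device I would rely on to overcome it.
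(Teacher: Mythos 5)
Your proposal is correct and follows essentially the same route as the paper's proof: the same norm-one-character $\varepsilon/3$ estimate for closedness (merely phrased via distances to $\overline{\widehat{\mu_{k}}(\mathbb{Z})}$ instead of explicit epsilons), and the same combination of the spectral mapping theorem with the fact that a continuous map carries a dense subset to a dense subset of the image for the functional calculus part. As a minor aside, your formula $\widehat{\widetilde{\mu}}(n)=\overline{\widehat{\mu}(-n)}$ is the accurate one for the paper's involution $\widetilde{\mu}(E)=\overline{\mu(E)}$, and since $\mathbb{Z}$ is symmetric it yields the same conclusion as the paper's statement $\widehat{\widetilde{\mu}}(n)=\overline{\widehat{\mu}(n)}$.
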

\begin{proof}
Let us take $\mu\in\mathscr{N}$. Then it is obvious that $\alpha\mu\in\mathscr{N}$ for all $\alpha\in\mathbb{C}$. Moreover, $\sigma(\widetilde{\mu})=\overline{\sigma(\mu)}$ and $\widehat{\widetilde{\mu}}(n)=\overline{\widehat{\mu}}(n)$ for $n\in\mathbb{Z}$ shows involutivness of $\mathscr{N}$.
Let $(\mu_{n})_{n=1}^{\infty}\subset\mathscr{N}$ satisfy $||\mu_{n}-\mu||\rightarrow 0$ as $n\rightarrow\infty$ for some $\mu\in M(\mathbb{T})$ and fix $\varepsilon>0$. Then for any $\varphi\in\mathfrak{M}(M(\mathbb{T}))$ and $k\in\mathbb{Z}$ we have
\begin{gather*}
|\widehat{\mu}(\varphi)-\widehat{\mu}(k)|\leq |\widehat{\mu}(\varphi)-\widehat{\mu_{n}}(\varphi)|+|\widehat{\mu_{n}}(\varphi)-\widehat{\mu_{n}}(k)|+|\widehat{\mu_{n}}(k)-\widehat{\mu}(k)|\leq
\\2||\mu-\mu_{n}||+|\widehat{\mu_{n}}(\varphi)-\widehat{\mu_{n}}(k)|.
\end{gather*}
Now, we fix $n_{0}\in\mathbb{N}$ such that $||\mu-\mu_{n_{0}}||<\frac{\varepsilon}{3}$. Since $\mu_{n_{0}}\in\mathscr{N}$ there exists $k_{n_{0}}\in\mathbb{N}$ satisfying $|\widehat{\mu_{n_{0}}}(\varphi)-\widehat{\mu_{n_{0}}}(k_{n_{0}})|<\frac{\varepsilon}{3}$ and we get
\begin{equation*}
|\widehat{\mu}(\varphi)-\widehat{\mu}(k_{n_{0}})|<\varepsilon.
\end{equation*}
Since the spectrum of an element in a commutative Banach algebra is an image of its Gelfand transform, the first part of the proof is finished.
\\
Let us take $\mu\in\mathscr{N}$ and $f$ - a holomorphic function defined on some open neighborhood of $\sigma(\mu)$. Then $f$ acts on $\mu$ and so there exists $\nu:=f(\mu)$ such that
\begin{equation}\label{rach}
\varphi(\nu)=f(\varphi(\mu)), \text{ for all $\varphi\in\mathfrak{M}(M(\mathbb{T}))$}.
\end{equation}
The assertion $f(\mu)\in\mathscr{N}$ follows from the spectral mapping theorem ($\sigma(\nu)=f(\sigma(\mu))$) and an elementary fact that under continuous mapping the dense set in the domain goes to the dense set in the image.
\end{proof}
We are going to show now how additive properties can be related to multiplicative ones. As an introduction we will prove the following fact.
\begin{fakt}\label{mm}
The set $\mathscr{N}$ is not closed under convolution.
\end{fakt}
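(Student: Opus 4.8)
The plan is to deduce the failure of closure under convolution from the already-quoted failure of closure under addition, using the exponential map to turn sums into convolution products. Recall that by Zafran's result mentioned above, \cite{Zafran} (equivalently, as a consequence of Theorem \ref{hs} together with the Wiener--Pitt phenomenon), the set $\mathscr{N}$ is not closed under addition; fix therefore $\mu_{1},\mu_{2}\in\mathscr{N}$ with $\mu_{1}+\mu_{2}\notin\mathscr{N}$. Since $M(\mathbb{T})$ is a commutative unital Banach algebra, the exponential (given by the functional calculus of Proposition \ref{mon}, or by its convolution power series) satisfies $e^{\mu}*e^{\nu}=e^{\mu+\nu}$ for all $\mu,\nu\in M(\mathbb{T})$; this is the announced bridge between the additive and the multiplicative structure.

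First I would reduce to measures of small norm. Because $\mathscr{N}$ is closed under multiplication by scalars (Proposition \ref{mon}), for every $t\neq 0$ we have $t\mu_{1},t\mu_{2}\in\mathscr{N}$, while $t(\mu_{1}+\mu_{2})\notin\mathscr{N}$ (otherwise rescaling by $t^{-1}$ would return $\mu_{1}+\mu_{2}$ to $\mathscr{N}$). Choosing $t>0$ so small that $\|t(\mu_{1}+\mu_{2})\|<\pi$ and writing $\rho:=t(\mu_{1}+\mu_{2})$, I obtain a measure $\rho\notin\mathscr{N}$ whose spectrum satisfies $\sigma(\rho)\subset\{z:|z|\le r(\rho)\le\|\rho\|<\pi\}$, a disc on which the exponential is injective.

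Next, suppose for contradiction that $\mathscr{N}$ is closed under convolution. By the functional calculus part of Proposition \ref{mon} applied to the entire function $\exp$, we have $e^{t\mu_{1}},e^{t\mu_{2}}\in\mathscr{N}$, hence $e^{t\mu_{1}}*e^{t\mu_{2}}=e^{\rho}\in\mathscr{N}$ by the assumed closure. It remains to transfer the naturality of $e^{\rho}$ back to $\rho$. Using the spectral mapping theorem $\sigma(e^{\rho})=e^{\sigma(\rho)}$, the identity $\widehat{e^{\rho}}(n)=e^{\widehat{\rho}(n)}$ (the Gelfand transform is multiplicative), and the fact that $e^{\overline{A}}=\overline{e^{A}}$ for bounded $A$ (continuity of $\exp$ on compacta), the naturality of $e^{\rho}$ reads $e^{\sigma(\rho)}=e^{\,\overline{\widehat{\rho}(\mathbb{Z})}}$. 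Since $\overline{\widehat{\rho}(\mathbb{Z})}\subset\sigma(\rho)$ lies in the disc where $\exp$ is injective, this forces $\sigma(\rho)=\overline{\widehat{\rho}(\mathbb{Z})}$, i.e. $\rho\in\mathscr{N}$, contradicting $\rho\notin\mathscr{N}$ and proving that $\mathscr{N}$ is not closed under convolution.

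The step I expect to be the main obstacle is exactly this last transfer. The forward implication (naturality of $\rho$ giving naturality of $e^{\rho}$) is immediate from the functional calculus, but the converse rests on the injectivity of $\exp$ on $\sigma(\rho)$, which is precisely why the preliminary rescaling to small norm is indispensable: without the bound $r(\rho)<\pi$ the equality $e^{\sigma(\rho)}=e^{\,\overline{\widehat{\rho}(\mathbb{Z})}}$ would determine $\sigma(\rho)$ only up to translations by $2\pi i\mathbb{Z}$ and could not be inverted. The accompanying bookkeeping of closures under the continuous map $\exp$ (establishing $e^{\overline{A}}=\overline{e^{A}}$) is routine but must be recorded to justify rewriting the naturality condition for $e^{\rho}$.
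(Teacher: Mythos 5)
Your proposal is correct, but it follows a genuinely different route from the paper's. The paper's proof stays entirely inside the multiplicative structure: after arranging (by adding $c\delta_{0}$) that the summand $\nu$ of a non-natural sum is invertible, it uses $\nu^{-1}\in\mathscr{N}$ (Proposition \ref{mon}) together with the identity $\mu+\nu=(\mu\ast\nu^{-1}+\delta_{0})\ast\nu$, so that the assumed closure under convolution, applied once to the pair $(\mu,\nu^{-1})$ and once to the displayed product, would force $\mu+\nu\in\mathscr{N}$; the only functional-calculus input is inversion, plus the trivial stability of $\mathscr{N}$ under adding the unit (which shifts both spectrum and transform by $1$). You instead use the exponential as the bridge between the additive and convolution structures, via $e^{t\mu_{1}}\ast e^{t\mu_{2}}=e^{t(\mu_{1}+\mu_{2})}$, rescaling so that $r(\rho)\leq\|\rho\|<\pi$ puts $\sigma(\rho)$ inside a disc on which $\exp$ is injective. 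Your crucial transfer-back step is precisely the paper's later Lemma \ref{row} (the partial converse to the functional calculus, valid when $f$ is injective on $\sigma(\mu)$), specialized to $f=\exp$; your inline justification via $e^{\overline{A}}=\overline{e^{A}}$ for bounded $A$ and injectivity on the disc of radius $\pi$ is sound, and in this special case even simpler than the compactness/subsequence argument the paper gives for the general lemma. Comparing the two: the paper's argument is shorter, needs no quantitative norm control, and avoids any injectivity considerations, at the price of an invertibility adjustment and a case distinction (it only concludes that one of two candidate pairs witnesses the failure); yours costs the rescaling and the injectivity bookkeeping, but dispenses with both the invertibility trick and the dichotomy, producing a single explicit witness pair $e^{t\mu_{1}},e^{t\mu_{2}}\in\mathscr{N}$ with $e^{t\mu_{1}}\ast e^{t\mu_{2}}\notin\mathscr{N}$, and it anticipates a tool (Lemma \ref{row}) that the paper needs later anyway.
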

\begin{proof}
Suppose on the contrary that $\mathscr{N}$ is closed under convolution and let us take $\mu,\nu\in\mathscr{N}$ such that $\mu+\nu\notin\mathscr{N}$. We may also assume that $\nu$ is invertible because otherwise we could consider $\mu+(\nu+c\delta_{0})\notin\mathscr{N}$ where $c>0$ is big enough to ensure the invertibility of $\nu+c\delta_{0}$. Since $\nu\in\mathscr{N}$ is invertible we obtain $\nu^{-1}\in\mathscr{N}$ (the element $\nu^{-1}$ is an action of the function $f(z)=z^{-1}$ on $\nu$ and the result follows from Proposition $\ref{mon}$). Now, if $\mu\ast\nu^{-1}\notin\mathscr{N}$ then we are done. Hence we are allowed to assume that $\mu\ast\nu^{-1}\in\mathscr{N}$. But in this case we have
\begin{equation*}
\mu+\nu=(\mu\ast\nu^{-1}+\delta_{0})\ast\nu\in\mathscr{N}
\end{equation*}
which contradicts the choice of $\mu$ and $\nu$.
\end{proof}
\textbf{Acknowledgements.} The authors would like to express their warm gratitude to the referee for the careful reading of the paper and many useful suggestions.
\section{Reasonability}
As we saw, the set $\mathscr{N}$ does not have the Banach algebra structure so it is convenient to introduce the set of 'suitable perturbations' which will be our main point of interest.
\begin{de}
We say that a measure $\mu\in M(\mathbb{T})$ is \textbf{spectrally reasonable}, if $\mu+\nu\in\mathscr{N}$ for all $\nu\in\mathscr{N}$. The set of all spectrally reasonable measures will be denoted by $\mathscr{S}$.
\end{de}
It is clear from the definition that the set $\mathscr{S}\subset\mathscr{N}$ is closed under addition and multiplication by complex numbers. Before we show that it has a Banach algebra structure we will prove an auxiliary lemma.
\begin{lem}\label{przy}
Spectrally reasonable measures have the following properties:
\begin{enumerate}
    \item If $\mu\in\mathscr{S}$ and $\nu\in\mathscr{N}$, then $\mu\ast\nu\in\mathscr{N}$.
    \item If $\mu\in\mathscr{S}$ is invertible, then $\mu^{-1}\in\mathscr{S}$.
\end{enumerate}
\end{lem}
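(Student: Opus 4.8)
The plan is to take the two parts in the order given and to base everything on the observation already used in the proof of Fact \ref{mm}: an invertible $\nu\in\mathscr N$ automatically has $\nu^{-1}\in\mathscr N$ by Proposition \ref{mon}. For part 1 I would first reduce to the case of invertible $\nu$. If $\nu$ is not invertible I replace it by $\nu+c\delta_0$ with $c$ larger than the spectral radius of $\nu$; this measure is invertible and still lies in $\mathscr N$, since adding a scalar multiple of the unit translates both $\sigma(\nu)$ and $\widehat{\nu}(\mathbb Z)$ by $c$. Because $\mu\ast(\nu+c\delta_0)=\mu\ast\nu+c\mu$ and $-c\mu\in\mathscr S$, once the invertible case supplies $\mu\ast\nu+c\mu\in\mathscr N$ the definition of $\mathscr S$ returns $\mu\ast\nu=(-c\mu)+(\mu\ast\nu+c\mu)\in\mathscr N$. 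So from now on $\nu$ may be assumed invertible.

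The engine I would use is the closure of $\mathscr N$ under the holomorphic functional calculus (Proposition \ref{mon}): for every $f$ holomorphic on a neighbourhood of $\sigma(\nu)$ one has $f(\nu)\in\mathscr N$, hence $\mu+f(\nu)\in\mathscr N$ because $\mu\in\mathscr S$. Evaluating at a fixed $\varphi\in\mathfrak{M}(M(\mathbb T))$ and using $\varphi(f(\nu))=f(\varphi(\nu))$ together with $\widehat{f(\nu)}(n)=f(\widehat{\nu}(n))$, I obtain that $\varphi(\mu)+f(\varphi(\nu))$ is a limit of the numbers $\widehat{\mu}(n)+f(\widehat{\nu}(n))$ for every such $f$. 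The goal is to upgrade this family of one-dimensional statements into the joint statement that the pair $(\varphi(\mu),\varphi(\nu))$ lies in the closure of $\{(\widehat{\mu}(n),\widehat{\nu}(n))\}$ in $\mathbb C^{2}$. Once that is available, continuity of multiplication gives $\varphi(\mu)\varphi(\nu)\in\overline{\widehat{\mu\ast\nu}(\mathbb Z)}$ for every $\varphi$, which combined with the always-valid inclusion $\overline{\widehat{\mu\ast\nu}(\mathbb Z)}\subset\sigma(\mu\ast\nu)$ yields $\mu\ast\nu\in\mathscr N$.

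The hard part is exactly this upgrade: knowing that each combination $\varphi(\mu)+f(\varphi(\nu))$ is approximated by $\widehat{\mu}(n)+f(\widehat{\nu}(n))$ does not by itself force both coordinates to converge along a common sequence of indices, since the witnessing indices may depend on $f$. I would attempt to force this by choosing $f$ to concentrate the approximation near $\varphi(\nu)$ — for example $f(z)=t\,(z-\varphi(\nu))$ with $|t|\to\infty$, which pushes $\widehat{\nu}(n)\to\varphi(\nu)$ along the witnessing indices — and then arguing that the first coordinate is pinned down as well. Making this localisation rigorous, and in particular excluding the oscillation that a single complex-linear penalty cannot suppress, is where the real work sits and where the specific structure of $M(\mathbb T)$ (beyond the formal facts recorded so far) must be invoked; every attempt to replace it by a purely algebraic manipulation runs into the circularity that the natural witness $\mu\ast(\nu-\delta_0)$ is itself a convolution of the same type.

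For part 2 I would derive the statement from part 1 together with the identity $\mu^{-1}+\nu=(\delta_0+\mu\ast\nu)\ast\mu^{-1}$, valid for invertible $\mu$. Since $\mu\in\mathscr S$ and $\nu\in\mathscr N$, part 1 gives $\mu\ast\nu\in\mathscr N$, hence $\delta_0+\mu\ast\nu\in\mathscr N$, while $\mu^{-1}\in\mathscr N$ by Proposition \ref{mon}. Thus $\mu^{-1}+\nu$ is a convolution of a natural measure with the invertible $\mu^{-1}$, and showing that this remains in $\mathscr N$ for every $\nu\in\mathscr N$ is precisely the assertion $\mu^{-1}\in\mathscr S$. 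I would close it either by a contradiction modelled on Fact \ref{mm} — assuming some $\nu$ with $\mu^{-1}+\nu\notin\mathscr N$, multiplying by $\mu$ to land back in $\mathscr N$, and playing the invertibility of $\mu$ against the definition of $\mathscr S$ — or, more directly, by re-running the functional-calculus argument of part 1 with $\mu^{-1}$ as the multiplier. In either route the genuine obstacle is the same simultaneous-approximation issue isolated above.
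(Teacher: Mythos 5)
Your reduction of part 1 to invertible $\nu$ is correct and agrees with the paper (which performs the same reduction via $\nu+\alpha\delta_{0}$, $\alpha>r(\nu)$, using that $-\alpha\mu\in\mathscr{S}$), but the core of your argument has a genuine gap, and you flag it yourself: from the one-variable facts $\mu+f(\nu)\in\mathscr{N}$ you try to extract the joint statement that $(\varphi(\mu),\varphi(\nu))$ lies in the closure of $\{(\widehat{\mu}(n),\widehat{\nu}(n)):n\in\mathbb{Z}\}$, and you never close that step. That joint statement is precisely the naturality of the joint spectrum $\sigma(\mu,\nu)$ --- what the paper later calls $\mu$ being \emph{highly reasonable} --- which is strictly stronger than what the lemma requires, and which the paper only establishes, by quite different means, for measures in Zafran's ideal $\mathscr{C}$. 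Your proposed penalties $f(z)=t(z-\varphi(\nu))$ cannot repair this: they pin down the second coordinate along the witnessing indices but say nothing about the first, which is exactly the oscillation you admit you cannot suppress.

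The paper's proof sidesteps simultaneous approximation entirely by treating one fixed $\lambda\in\sigma(\mu\ast\nu)$ at a time. For invertible $\nu$,
\begin{equation*}
\lambda\in\sigma(\mu\ast\nu)\Leftrightarrow 0\in\sigma(\mu\ast\nu-\lambda\delta_{0})\Leftrightarrow 0\in\sigma(\mu-\lambda\nu^{-1}),
\end{equation*}
and $\mu-\lambda\nu^{-1}\in\mathscr{N}$ comes straight from the definition of $\mathscr{S}$, since $-\lambda\nu^{-1}\in\mathscr{N}$ by Proposition \ref{mon}; this is the only place the functional calculus is used. Hence some integers $(n_{k})$ satisfy $\widehat{\mu}(n_{k})-\lambda/\widehat{\nu}(n_{k})\rightarrow 0$, and multiplying by the bounded sequence $\widehat{\nu}(n_{k})$ gives $\widehat{(\mu\ast\nu)}(n_{k})\rightarrow\lambda$: no separate convergence of $\widehat{\mu}(n_{k})$ or $\widehat{\nu}(n_{k})$ is ever needed, because only the combination enters. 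Your part 2 has the same defect in another guise: the identity $\mu^{-1}+\nu=(\delta_{0}+\mu\ast\nu)\ast\mu^{-1}$ leaves you needing that a convolution of two measures in $\mathscr{N}$ stays in $\mathscr{N}$, which is false in general (Fact \ref{mm}) and, as you concede, circular here. The paper instead repeats the $\lambda$-by-$\lambda$ device:
\begin{equation*}
\lambda\in\sigma(\mu^{-1}+\nu)\Leftrightarrow 0\in\sigma(\mu\ast\nu+\delta_{0}-\lambda\mu),
\end{equation*}
where $\mu\ast\nu+\delta_{0}\in\mathscr{N}$ by part 1 and $-\lambda\mu\in\mathscr{S}$, so the definition of $\mathscr{S}$ applies once more; dividing the resulting approximating sequence by $\widehat{\mu}(n_{k})$, legitimate since $|1/\widehat{\mu}(n_{k})|=|\widehat{\mu^{-1}}(n_{k})|\leq\|\mu^{-1}\|$, yields $\widehat{\mu^{-1}}(n_{k})+\widehat{\nu}(n_{k})\rightarrow\lambda$. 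In short, the missing idea is the equivalence $\lambda\in\sigma(x\ast y)\Leftrightarrow 0\in\sigma(x-\lambda y^{-1})$ for invertible $y$, which converts the multiplicative question into an additive one to which the definition of $\mathscr{S}$ applies directly.
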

\begin{proof}
Let us take $\mu\in\mathscr{S}$ and consider first only invertible $\nu\in\mathscr{N}$. Then, by Proposition $\ref{mon}$ we have $\nu^{-1}\in\mathscr{N}$ (inverse of the element is a result of the action of the function $f(z)=z^{-1}$). From the definition of $\mathscr{S}$ we get $\mu+\nu^{-1}\in\mathscr{N}$. Now,
\begin{equation*}
\lambda\in\sigma(\mu\ast\nu)\Leftrightarrow 0\in\sigma(\mu\ast\nu-\lambda\delta_{0})\Leftrightarrow 0\in\sigma(\mu-\lambda\nu^{-1}).
\end{equation*}
Since the set $\mathscr{N}$ is closed under multiplication by scalars $-\lambda\nu^{-1}\in\mathscr{N}$ which by the definition of $\mathscr{S}$ leads to $\mu-\lambda\nu^{-1}\in\mathscr{N}$. Hence, there exists a sequence of integers $(n_{k})$ such that
\begin{equation*}
\lim_{k\rightarrow\infty}\left(\widehat{\mu}(n_{k})-\frac{\lambda}{\widehat{\nu}(n_{k})}\right)=0.
\end{equation*}
This is obviously equivalent to
\begin{equation*}
\lim_{k\rightarrow\infty}\widehat{(\mu\ast\nu)}(n_{k})=\lambda
\end{equation*}
which shows that $\mu\ast\nu\in\mathscr{N}$. For general $\nu\in\mathscr{N}$ we take $\alpha\in\mathbb{R}_{+}$ such that $\nu+\alpha\delta_{0}$ is invertible (we may put any $\alpha>r(\nu)$). Then
\begin{equation*}
\mu\ast\nu=\mu\ast(\nu+\alpha\delta_{0}-\alpha\delta_{0})=\mu\ast(\nu+\alpha\delta)-\alpha\mu.
\end{equation*}
From the earlier part of the proof $\mu\ast(\nu+\alpha\delta)\in\mathscr{N}$ and finally $\mu\ast\nu\in\mathscr{N}$ which gives the first claim of the lemma.
\\
We move to the second statement. Let us take an invertible $\mu\in\mathscr{S}$ and $\nu\in\mathscr{N}$. Then, similarly to previous arguments we have
\begin{equation*}
\lambda\in\sigma(\mu^{-1}+\nu)\Leftrightarrow 0\in\sigma(\mu^{-1}+\nu-\lambda\delta_{0})\Leftrightarrow 0\in\sigma(\mu\ast\nu+\delta_{0}-\lambda\mu).
\end{equation*}
From the first part of the lemma we have $\mu\ast\nu+\delta_{0}\in\mathscr{N}$ which gives the desired conclusion in exactly the same way as before.
\end{proof}
We are ready now to show that $\mathscr{S}$ has a Banach algebra structure.
\begin{tw}\label{pod}
The set $\mathscr{S}$ is closed, unital $^{\ast}$-subalgebra of $M(\mathbb{T})$.
\end{tw}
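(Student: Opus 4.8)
The plan is to verify, one at a time, the defining properties of a closed unital $^{\ast}$-subalgebra, leaning throughout on Lemma \ref{przy} and on the fact that $\mathscr{N}$ is itself closed and involutive (Proposition \ref{mon}). Two of the required properties are already recorded in the text: $\mathscr{S}$ is closed under addition and under multiplication by complex numbers. Topological closedness is then immediate: if $\mu_{n}\in\mathscr{S}$ and $\|\mu_{n}-\mu\|\rightarrow 0$, then for every $\nu\in\mathscr{N}$ we have $\mu_{n}+\nu\in\mathscr{N}$ and $\|(\mu_{n}+\nu)-(\mu+\nu)\|\rightarrow 0$, so $\mu+\nu\in\mathscr{N}$ because $\mathscr{N}$ is closed; as $\nu$ was arbitrary, $\mu\in\mathscr{S}$.

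For the unit I will first observe that $c\delta_{0}\in\mathscr{S}$ for every $c\in\mathbb{C}$ (in particular $\delta_{0}\in\mathscr{S}$). This rests on the elementary identity $\sigma(c\delta_{0}+\nu)=c+\sigma(\nu)$, valid because $c\delta_{0}$ is a scalar multiple of the unit: for $\nu\in\mathscr{N}$ this gives $\sigma(c\delta_{0}+\nu)=c+\overline{\widehat{\nu}(\mathbb{Z})}=\overline{\widehat{(c\delta_{0}+\nu)}(\mathbb{Z})}$, whence $c\delta_{0}+\nu\in\mathscr{N}$. For the involution, take $\mu\in\mathscr{S}$ and $\nu\in\mathscr{N}$; since $\widetilde{\nu}\in\mathscr{N}$ by Proposition \ref{mon}, we get $\mu+\widetilde{\nu}\in\mathscr{N}$, and applying the involution (which is additive and preserves $\mathscr{N}$) yields $\widetilde{\mu}+\nu=\widetilde{\mu+\widetilde{\nu}}\in\mathscr{N}$; hence $\widetilde{\mu}\in\mathscr{S}$.

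The main point — and the only place where genuine work is needed — is closure under convolution. Here the plan is to reduce to the invertible case by a factorization trick. Suppose first that $\mu\in\mathscr{S}$ is invertible, and let $\nu\in\mathscr{S}$, $\rho\in\mathscr{N}$. By Lemma \ref{przy}(2) we have $\mu^{-1}\in\mathscr{S}$, so Lemma \ref{przy}(1) gives $\mu^{-1}\ast\rho\in\mathscr{N}$; since $\nu\in\mathscr{S}$ this forces $\nu+\mu^{-1}\ast\rho\in\mathscr{N}$, and a second application of Lemma \ref{przy}(1) gives
\[
\mu\ast\nu+\rho=\mu\ast(\nu+\mu^{-1}\ast\rho)\in\mathscr{N}.
\]
As $\rho$ was arbitrary, $\mu\ast\nu\in\mathscr{S}$. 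For a general $\mu\in\mathscr{S}$ I will pick $\alpha>r(\mu)$, so that $\mu+\alpha\delta_{0}$ is invertible and still lies in $\mathscr{S}$ (using $\alpha\delta_{0}\in\mathscr{S}$ and closure under addition); the invertible case yields $(\mu+\alpha\delta_{0})\ast\nu\in\mathscr{S}$, whence $\mu\ast\nu=(\mu+\alpha\delta_{0})\ast\nu-\alpha\nu\in\mathscr{S}$ since $\alpha\nu\in\mathscr{S}$ and $\mathscr{S}$ is closed under subtraction. I expect the factorization $\mu\ast\nu+\rho=\mu\ast(\nu+\mu^{-1}\ast\rho)$, together with careful bookkeeping of where membership in $\mathscr{S}$ versus $\mathscr{N}$ is invoked, to be the crux of the argument; everything else is routine.
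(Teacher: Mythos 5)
Your proof is correct, and while it shares the paper's overall skeleton---settle the invertible case first, then regularize a general $\mu\in\mathscr{S}$ by passing to $\mu+\alpha\delta_{0}$ with $\alpha>r(\mu)$ and subtracting $\alpha\nu$ at the end---the way you handle the crux is genuinely different and cleaner. The paper argues pointwise on the spectrum: for each $\lambda$ it factors out the invertible $\mu_{2}$ to get $\lambda\in\sigma(\mu_{1}\ast\mu_{2}+\nu)\Leftrightarrow 0\in\sigma(\mu_{1}-\lambda\mu_{2}^{-1}+\nu\ast\mu_{2}^{-1})$, checks via both parts of Lemma \ref{przy} that the right-hand measure lies in $\mathscr{N}$, and then must re-run the sequence-extraction argument from the proof of that lemma (find integers $(n_{k})$ along which the transform tends to $0$, multiply back by the bounded sequence $\widehat{\mu_{2}}(n_{k})$) to conclude $\widehat{(\mu_{1}\ast\mu_{2}+\nu)}(n_{k})\rightarrow\lambda$. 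You instead perform the same factorization once, at the level of the algebra rather than per spectral point: $\mu\ast\nu+\rho=\mu\ast(\nu+\mu^{-1}\ast\rho)$, where $\nu+\mu^{-1}\ast\rho\in\mathscr{N}$ by Lemma \ref{przy} and the definition of $\mathscr{S}$, so a second application of Lemma \ref{przy}(1) yields $\mu\ast\nu+\rho\in\mathscr{N}$ outright. This buys something real: all the limiting arguments stay quarantined inside Lemma \ref{przy}, used as a black box, so you never touch $\lambda$-shifts or boundedness along subsequences, and your route makes transparent that the two parts of Lemma \ref{przy} already formally imply multiplicative closedness of $\mathscr{S}$. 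Your verifications of the routine properties---topological closedness from closedness of $\mathscr{N}$, the unit via $\sigma(c\delta_{0}+\nu)=c+\sigma(\nu)$, and the involution via $\widetilde{\mu}+\nu=\widetilde{\mu+\widetilde{\nu}}$---are precisely the arguments the paper compresses into ``follows directly from Proposition \ref{mon}'', and they are all correct.
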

\begin{proof}
Closedness and involutivness of $\mathscr{S}$ follows directly from Proposition $\ref{mon}$. Of course, $\delta_{0}\in\mathscr{S}$ and so it is enough to prove that if $\mu_{1},\mu_{2}\in\mathscr{S}$, then
$\mu_{1}\ast\mu_{2}\in\mathscr{S}$. Let us take $\nu\in\mathscr{N}$ and assume first that $\mu_{2}$ is invertible. Then
\begin{equation*}
\lambda\in\sigma(\mu_{1}\ast\mu_{2}+\nu)\Leftrightarrow 0\in\sigma(\mu_{1}\ast\mu_{2}+\nu-\lambda\delta_{0})
\Leftrightarrow 0\in\sigma(\mu_{1}-\lambda\mu_{2}^{-1}+\nu\ast\mu_{2}^{-1}).
\end{equation*}
From the previous lemma (second part) $\mu_{2}^{-1}\in\mathscr{S}$ and so $\mu_{1}-\lambda\mu_{2}^{-1}\in\mathscr{S}$. Moreover, from the first part of the last lemma $\nu\ast\mu_{2}^{-1}\in\mathscr{N}$ which shows $\mu_{1}-\lambda\mu_{2}^{-1}+\nu\ast\mu_{2}^{-1}\in\mathscr{N}$ and we are able to proceed analogously as in the proof of the lemma. For general $\mu_{2}$ we take once again $\alpha\in\mathbb{R}_{+}$ such that $\mu_{2}+\alpha_{\delta_{0}}$ is invertible and then
\begin{equation*}
\mu_{1}\ast\mu_{2}+\nu=\mu_{1}\ast(\mu_{2}+\alpha\delta_{0})-\alpha\mu_{1}+\nu.
\end{equation*}
From the first part we obtain $\mu_{1}\ast(\mu_{2}+\alpha\delta_{0})-\alpha\mu_{1}\in\mathscr{S}$ which finishes the proof.
\end{proof}
Now, we will examine other relevant features of $\mathscr{S}$ (the abbreviation $\mathfrak{M}(\mathscr{S})$ stands for the Gelfand space of $\mathscr{S}$).
\begin{prop}\label{sym}
The algebra $\mathscr{S}$ is a symmetric Banach $^{\ast}$-algebra, i.e.
\begin{equation*}
\varphi(\widetilde{\mu})=\overline{\varphi(\mu)},\text{ for all $\varphi\in\mathfrak{M}(\mathscr{S})$ and $\mu\in\mathscr{S}$}.
\end{equation*}
\end{prop}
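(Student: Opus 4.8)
The plan is to reduce the symmetry identity to the statement that every self-adjoint element of $\mathscr{S}$ has real spectrum, and then to exploit the naturality of the spectrum together with a comparison between the spectrum computed inside $\mathscr{S}$ and the one computed in $M(\mathbb{T})$.

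First I would pass to the Cartesian decomposition. Given $\mu\in\mathscr{S}$, set $h_{1}:=\frac{1}{2}(\mu+\widetilde{\mu})$ and $h_{2}:=\frac{1}{2i}(\mu-\widetilde{\mu})$; both are self-adjoint ($\widetilde{h_{j}}=h_{j}$) and, since $\mathscr{S}$ is a $^{\ast}$-subalgebra (Theorem \ref{pod}), both belong to $\mathscr{S}$. As $\mu=h_{1}+ih_{2}$ and $\widetilde{\mu}=h_{1}-ih_{2}$, for any $\varphi\in\mathfrak{M}(\mathscr{S})$ we get $\varphi(\mu)=\varphi(h_{1})+i\varphi(h_{2})$ and $\varphi(\widetilde{\mu})=\varphi(h_{1})-i\varphi(h_{2})$. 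Hence the desired identity $\varphi(\widetilde{\mu})=\overline{\varphi(\mu)}$ is equivalent to $\varphi(h_{1}),\varphi(h_{2})\in\mathbb{R}$, and it suffices to prove that $\varphi(h)\in\mathbb{R}$ for every self-adjoint $h\in\mathscr{S}$ and every $\varphi\in\mathfrak{M}(\mathscr{S})$.

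Next I would compute the spectrum of such an $h$ in the large algebra. Since $\widehat{\widetilde{h}}(n)=\overline{\widehat{h}(n)}$ (Proposition \ref{mon}) and $h=\widetilde{h}$, all Fourier--Stieltjes coefficients $\widehat{h}(n)$ are real. Because $\mathscr{S}\subset\mathscr{N}$, the element $h$ has a natural spectrum, so
\begin{equation*}
\sigma_{M(\mathbb{T})}(h)=\overline{\widehat{h}(\mathbb{Z})}\subset\mathbb{R}.
\end{equation*}

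The remaining, and I expect the only delicate, point is that $\varphi(h)$ is an element of the spectrum of $h$ computed in the subalgebra $\mathscr{S}$, which a priori may be strictly larger than $\sigma_{M(\mathbb{T})}(h)$. Here I would invoke the standard fact that for a closed unital subalgebra the smaller spectrum is obtained from the larger one by filling in some of the bounded components of its complement, so in particular $\partial\,\sigma_{\mathscr{S}}(h)\subset\sigma_{M(\mathbb{T})}(h)$. Since $\sigma_{M(\mathbb{T})}(h)\subset\mathbb{R}$ is a compact subset of the line, its complement in $\mathbb{C}$ is connected and unbounded; there are no bounded components to fill, and therefore $\sigma_{\mathscr{S}}(h)=\sigma_{M(\mathbb{T})}(h)\subset\mathbb{R}$. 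As $\varphi(h)\in\sigma_{\mathscr{S}}(h)$ for every $\varphi\in\mathfrak{M}(\mathscr{S})$, we conclude $\varphi(h)\in\mathbb{R}$, which by the first paragraph completes the proof. The main obstacle is thus purely this subalgebra--spectrum comparison: once the self-adjoint spectra are seen to be real, the connectedness of the complement removes any discrepancy between the two notions of spectrum.
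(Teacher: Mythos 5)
Your proof is correct and follows essentially the same route as the paper: decompose $\mu$ into its hermitian and anti-hermitian parts, then show that every self-adjoint element $h\in\mathscr{S}$ has real spectrum, using $\mathscr{S}\subset\mathscr{N}$ together with the reality of the Fourier--Stieltjes coefficients $\widehat{h}(n)$. If anything you are more careful than the paper, whose one-line argument tacitly identifies $\sigma_{\mathscr{S}}(h)$ with $\sigma_{M(\mathbb{T})}(h)$ when it evaluates $\varphi\in\mathfrak{M}(\mathscr{S})$ at $h$; your appeal to the standard spectral-permanence fact --- passing to a closed unital subalgebra can only fill in bounded components of the complement of the spectrum, and a compact subset of $\mathbb{R}$ has none --- makes that identification explicit and closes the only step the paper leaves implicit.
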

\begin{proof}
This is not difficult since for $\mu=\widetilde{\mu}\in\mathscr{S}$ we know that $\sigma(\mu)\subset\mathbb{R}$ which gives
\begin{equation*}
\varphi(\widetilde{\mu})=\varphi(\mu)=\overline{\varphi(\mu)}\text{ for all $\varphi\in\mathfrak{M}(\mathscr{S})$}.
\end{equation*}
For general $\mu\in\mathscr{S}$ we use standard decomposition into hermitian and anti-hermitian part
\begin{equation*}
\mu=\frac{\mu+\widetilde{\mu}}{2}+i\frac{\mu-\widetilde{\mu}}{2i}
\end{equation*}
and the result follows from the previous argument.
\end{proof}
The last proposition leads to the corollary which sheds some light on the structure of $\mathfrak{M}(\mathscr{S})$.
\begin{tw}
The set $\mathbb{Z}$ identified with functionals $\mu\mapsto\widehat{\mu}(n)$ is dense in $\mathfrak{M}(\mathscr{S})$.
\end{tw}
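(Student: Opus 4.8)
The plan is to realize $\mathfrak{M}(\mathscr{S})$ as a compact Hausdorff space and to show that the integer points $\psi_n\colon\mu\mapsto\widehat{\mu}(n)$ are \emph{norming} for the Gelfand transform on $\mathscr{S}$; density then drops out of a standard separation argument. First I would record that each $\psi_n$ is a genuine character of $\mathscr{S}$: it is the restriction to $\mathscr{S}$ of a character of $M(\mathbb{T})$ and it is nonzero since $\psi_n(\delta_0)=\widehat{\delta_0}(n)=1$. Thus $\mathbb{Z}$ really does embed into $\mathfrak{M}(\mathscr{S})$ via $n\mapsto\psi_n$, and the Gelfand topology on $\mathfrak{M}(\mathscr{S})$ is the relevant one.

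The decisive step is a spectral permanence statement: for every $\mu\in\mathscr{S}$ the range of the Gelfand transform $\widehat{\mu}$ over $\mathfrak{M}(\mathscr{S})$ equals $\overline{\widehat{\mu}(\mathbb{Z})}$. To get this I would use that $\mathscr{S}$ is inverse-closed in $M(\mathbb{T})$: by the second part of Lemma \ref{przy}, if $\mu\in\mathscr{S}$ is invertible in $M(\mathbb{T})$ then $\mu^{-1}\in\mathscr{S}$. Consequently, for $\lambda\notin\sigma_{M(\mathbb{T})}(\mu)$ the resolvent $(\mu-\lambda\delta_0)^{-1}$ already lies in $\mathscr{S}$, whence $\sigma_{\mathscr{S}}(\mu)=\sigma_{M(\mathbb{T})}(\mu)$. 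Since $\mathscr{S}\subset\mathscr{N}$, the latter equals $\overline{\widehat{\mu}(\mathbb{Z})}$, and as the range of $\widehat{\mu}$ on $\mathfrak{M}(\mathscr{S})$ is exactly $\sigma_{\mathscr{S}}(\mu)$, I conclude $\{\varphi(\mu):\varphi\in\mathfrak{M}(\mathscr{S})\}=\overline{\widehat{\mu}(\mathbb{Z})}$. Passing to suprema of moduli yields the key identity
\begin{equation*}
\sup_{\varphi\in\mathfrak{M}(\mathscr{S})}|\widehat{\mu}(\varphi)|=\sup_{n\in\mathbb{Z}}|\widehat{\mu}(n)|\qquad(\mu\in\mathscr{S}),
\end{equation*}
so the functionals $(\psi_n)_{n\in\mathbb{Z}}$ norm every Gelfand transform.

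With this identity available I would argue by contradiction. Suppose $K:=\overline{\{\psi_n:n\in\mathbb{Z}\}}$ is a proper (hence compact, properly contained) subset of $\mathfrak{M}(\mathscr{S})$ and pick $\varphi_0\in\mathfrak{M}(\mathscr{S})\setminus K$. By Proposition \ref{sym} the algebra $\widehat{\mathscr{S}}$ of Gelfand transforms is a unital, point-separating subalgebra of $C(\mathfrak{M}(\mathscr{S}))$ which is closed under complex conjugation, so the Stone--Weierstrass theorem makes it dense. Choosing by Urysohn's lemma a function $g\in C(\mathfrak{M}(\mathscr{S}))$ with $0\le g\le 1$, $g(\varphi_0)=1$ and $g\equiv 0$ on $K$, and approximating $g$ to within $1/3$ by some $\widehat{\mu}$ with $\mu\in\mathscr{S}$, I would obtain $|\widehat{\mu}(\psi_n)|<1/3$ for all $n$ while $|\widehat{\mu}(\varphi_0)|>2/3$. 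This flatly contradicts the key identity, forcing $K=\mathfrak{M}(\mathscr{S})$, i.e.\ $\mathbb{Z}$ is dense.

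The main obstacle is the spectral permanence step: a priori the relative spectrum $\sigma_{\mathscr{S}}(\mu)$ could be strictly larger than $\sigma_{M(\mathbb{T})}(\mu)=\overline{\widehat{\mu}(\mathbb{Z})}$, in which case some character of $\mathscr{S}$ might take a value outside $\overline{\widehat{\mu}(\mathbb{Z})}$ and the norming identity would collapse. Everything hinges on the inverse-closedness furnished by Lemma \ref{przy}; once the range of $\widehat{\mu}$ is pinned to $\overline{\widehat{\mu}(\mathbb{Z})}$, the separation argument is routine. I would also verify carefully that \emph{invertible} in Lemma \ref{przy} is understood in $M(\mathbb{T})$, so that the resolvent $(\mu-\lambda\delta_0)^{-1}$ used above genuinely exists and belongs to $\mathscr{S}$.
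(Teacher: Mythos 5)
Your proof is correct and follows essentially the same route as the paper: the symmetry of $\mathscr{S}$ (Proposition \ref{sym}) makes $\widehat{\mathscr{S}}$ a self-adjoint unital subalgebra, Stone--Weierstrass gives its density in $C(\mathfrak{M}(\mathscr{S}))$, and a Urysohn function separating $\overline{\mathbb{Z}}$ from a putative $\varphi_{0}$ produces the same $\tfrac{1}{3}$--$\tfrac{2}{3}$ contradiction. If anything, your write-up is more careful than the paper's at the final step: the paper silently treats $\varphi(\mu)$ for $\varphi\in\mathfrak{M}(\mathscr{S})$ as a point of $\sigma_{M(\mathbb{T})}(\mu)=\overline{\widehat{\mu}(\mathbb{Z})}$, while a character of the subalgebra a priori only places it in $\sigma_{\mathscr{S}}(\mu)$, and your explicit spectral-permanence argument via the inverse-closedness in Lemma \ref{przy} is precisely the justification this step needs.
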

\begin{proof}
Let $\widehat{\mathscr{S}}=\{\widehat{\mu}:\mu\in\mathscr{S}\}$. The assertion of Proposition $\ref{sym}$ implies that $\widehat{\mathscr{S}}\subset C(\mathfrak{M}(\mathscr{S}))$ is a self-adjoint subalgebra which contains a constant function. Hence from the Stone - Weierstrass theorem $\widehat{\mathscr{S}}$ is dense in $C(\mathfrak{M}(\mathscr{S}))$.
\\
Let us assume on the contrary that $\overline{\mathbb{Z}}\neq\mathfrak{M}(\mathscr{S})$. Then from the Urysohn's lemma (in fact it follows just from the complete regularity of $\mathfrak{M}(\mathscr{S})$) there exists $f\in C(\mathfrak{M}(\mathscr{S}))$ such that $f|_{\overline{\mathbb{Z}}}\equiv 0$ and $f(\varphi)=1$ for some $\varphi\in\mathfrak{M}(\mathscr{S})\setminus\overline{\mathbb{Z}}$. Using the density of $\widehat{\mathscr{S}}$ in $C(\mathfrak{M}(\mathscr{S}))$ we find $\mu\in\mathscr{S}$ such that $||\widehat{\mu}-f||_{C(\mathfrak{M}(\mathscr{S}))}<\frac{1}{3}$. It gives $|\widehat{\mu}|_{\overline{\mathbb{Z}}}|<\frac{1}{3}$ and $|\widehat{\mu}(\varphi)|>\frac{2}{3}$. This is impossible because $\mu\in\mathscr{N}$ and so there exists a sequence $(n_{k})$ satisfying
\begin{equation*}
\lim_{k\rightarrow\infty}\widehat{\mu}(n_{k})=\widehat{\mu}(\varphi).
\end{equation*}
\end{proof}
\section{All Zafrans are reasonable}
In this section we will determine some members of $\mathscr{S}$. Let us recall first the main theorem of Zafran's paper $\cite{Zafran}$ (here $M_{0}(\mathbb{T})$ is an ideal in $M(\mathbb{T})$ consisting of all measures with the Fourier-Stieltjes transforms vanishing at infinity).
\begin{tw}
Let $\mathscr{C}=\{\mu\in
M_{0}(\mathbb{T}):\sigma(\mu)=\widehat{\mu}(\mathbb{Z})\cup\{0\}\}$.
\begin{enumerate}
    \item If
    $\varphi\in\mathfrak{M}(M_{0}(\mathbb{T}))\setminus\mathbb{Z}$,
    then $\varphi(\mu)=0$ for all $\mu\in\mathscr{C}$.
    \item $\mathscr{C}$ is closed ideal in $M_{0}(\mathbb{T})$.
    \item $\mathfrak{M}(\mathscr{C})=\mathbb{Z}$.
\end{enumerate}
\end{tw}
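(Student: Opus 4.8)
The first thing I would record is the identity $\mathscr{C}=\mathscr{N}\cap M_{0}(\mathbb{T})$: for $\mu\in M_{0}(\mathbb{T})$ the coefficients $\widehat{\mu}(n)$ tend to $0$, so $\overline{\widehat{\mu}(\mathbb{Z})}=\widehat{\mu}(\mathbb{Z})\cup\{0\}$ and the defining condition of $\mathscr{C}$ is exactly naturality of the spectrum. Closedness of $\mathscr{C}$ (needed in the second assertion) is then immediate from Proposition \ref{mon} and closedness of $M_{0}(\mathbb{T})$. My plan is to prove the three assertions in the order $(1)\Rightarrow(2)\Rightarrow(3)$, treating $(1)$ as the analytic core from which the algebraic statements follow by routine functional-extension arguments.

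For assertion $(1)$ the key point is that every nonzero spectral value of a measure in $\mathscr{C}$ is an isolated point of the spectrum. Fix $\varphi\in\mathfrak{M}(M_{0}(\mathbb{T}))\setminus\mathbb{Z}$ and $\mu\in\mathscr{C}$, and suppose towards a contradiction that $\lambda_{0}:=\varphi(\mu)\neq 0$. Since $\lambda_{0}\in\sigma(\mu)=\widehat{\mu}(\mathbb{Z})\cup\{0\}$ and $\widehat{\mu}(n)\to 0$, the set $S:=\{n\in\mathbb{Z}:\widehat{\mu}(n)=\lambda_{0}\}$ is finite and $\lambda_{0}$ is isolated in $\sigma(\mu)$. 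I would then choose a holomorphic function $f$ equal to $1$ on a neighbourhood of $\lambda_{0}$ and to $0$ on a neighbourhood of $\sigma(\mu)\setminus\{\lambda_{0}\}$ (in particular near $0$), and form the idempotent $p:=f(\mu)$ in $M(\mathbb{T})$. Since $\widehat{p}(n)=f(\widehat{\mu}(n))$ equals $\mathbf{1}_{S}(n)$, the measure $p$ is a trigonometric polynomial; thus $p\in M_{0}(\mathbb{T})$ and $p\ast M(\mathbb{T})$ is the finite-dimensional algebra of trigonometric polynomials with spectrum in $S$.

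Now, extending $\varphi$ to $\widetilde{\varphi}\in\mathfrak{M}(M(\mathbb{T}))$ (a nonzero multiplicative functional on the ideal $M_{0}(\mathbb{T})$ extends uniquely to $M(\mathbb{T})$), the functional calculus gives $\varphi(p)=f(\lambda_{0})=1$. Multiplicativity then yields $\varphi(\nu)=\varphi(\nu\ast p)$ for every $\nu\in M_{0}(\mathbb{T})$, and since $\nu\ast p$ lies in $p\ast M(\mathbb{T})\cong\mathbb{C}^{S}$, on which every character is a coordinate evaluation, I obtain $\varphi(\nu)=\widehat{\nu}(n_{1})$ for a fixed $n_{1}\in S$ and all $\nu$. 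Hence $\varphi$ is evaluation at the integer $n_{1}$, contradicting $\varphi\notin\mathbb{Z}$. I expect this passage — manufacturing the finite-dimensional idempotent and reading off that $\varphi$ must be a point evaluation — to be the main obstacle; everything hinges on the decay $\widehat{\mu}(n)\to 0$, which forces the nonzero spectral points to be isolated.

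Assertion $(2)$ follows by evaluating $\mu\ast\nu$ (with $\mu\in\mathscr{C}$, $\nu\in M(\mathbb{T})$, so $\mu\ast\nu\in M_{0}(\mathbb{T})$) against all of $\mathfrak{M}(M(\mathbb{T}))$: a character either annihilates $M_{0}(\mathbb{T})$, giving $0$, or restricts to some $\varphi\in\mathfrak{M}(M_{0}(\mathbb{T}))$; if $\varphi\in\mathbb{Z}$ the value is $\widehat{\mu\ast\nu}(n)$, while if $\varphi\notin\mathbb{Z}$ then assertion $(1)$ gives $\varphi(\mu)=0$, whence $\varphi(\mu\ast\nu)=0$. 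Thus $\sigma(\mu\ast\nu)=\widehat{\mu\ast\nu}(\mathbb{Z})\cup\{0\}$, so $\mathscr{C}$ is a (closed) ideal. Finally, for assertion $(3)$ each $n\in\mathbb{Z}$ is a nonzero character of $\mathscr{C}$, since it does not vanish on the idempotent $e_{n}\in\mathscr{C}$ with $\widehat{e_{n}}=\mathbf{1}_{\{n\}}$; conversely, any nonzero $\Phi\in\mathfrak{M}(\mathscr{C})$ extends, via the ideal structure from $(2)$, to some $\varphi\in\mathfrak{M}(M_{0}(\mathbb{T}))$, and by $(1)$ this $\varphi$ cannot lie outside $\mathbb{Z}$ without forcing $\Phi\equiv 0$. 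Hence $\varphi$, and therefore $\Phi$, is evaluation at an integer, giving $\mathfrak{M}(\mathscr{C})=\mathbb{Z}$.
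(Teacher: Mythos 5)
A preliminary remark: the paper offers no proof of this theorem at all --- it is imported verbatim from Zafran's paper \cite{Zafran} ("Let us recall first the main theorem of Zafran's paper"), so there is no internal argument to compare yours against. Judged on its own merits, your proof is correct, and it is essentially the classical argument. The opening reduction $\mathscr{C}=\mathscr{N}\cap M_{0}(\mathbb{T})$ is right: for $\mu\in M_{0}(\mathbb{T})$ the only possible accumulation point of $\widehat{\mu}(\mathbb{Z})$ is $0$, so $\overline{\widehat{\mu}(\mathbb{Z})}=\widehat{\mu}(\mathbb{Z})\cup\{0\}$. The core of assertion (1) is sound: the decay of the transform makes every nonzero $\lambda_{0}=\varphi(\mu)\in\sigma(\mu)$ an isolated spectral point with finite fibre $S=\{n:\widehat{\mu}(n)=\lambda_{0}\}$; the locally constant holomorphic $f$ yields an idempotent $p=f(\mu)$ with $\widehat{p}=\mathbf{1}_{S}$, hence a trigonometric polynomial by uniqueness of Fourier--Stieltjes coefficients; and since the minimal idempotents $e_{n}$ ($n\in S$) are mutually orthogonal and sum to $p$, while a character takes values in $\{0,1\}$ on idempotents, the extension $\widetilde{\varphi}$ sends exactly one $e_{n_{1}}$ to $1$, forcing $\varphi(\nu)=\widetilde{\varphi}(\nu\ast p)=\widehat{\nu}(n_{1})$ and thus $\varphi\in\mathbb{Z}$, a contradiction. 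Note that this is the same circle of ideas the paper deploys elsewhere: your two-open-sets idempotent construction is exactly the device of Lemma \ref{izz}, and the decomposition $\mathfrak{M}(M(\mathbb{T}))=\mathfrak{M}(M_{0}(\mathbb{T}))\cup h(M_{0}(\mathbb{T}))$ driving your (2) and (3) is precisely what the paper invokes (from \cite{ek}) in its proof that $\mathscr{C}\subset\mathscr{S}$.

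One omission to repair: in (2) you verify only the absorbing property $\mu\ast\nu\in\mathscr{C}$, but an ideal must also be a linear subspace, and closure of $\mathscr{C}$ under addition is not a formality --- it is exactly the kind of statement that fails for $\mathscr{N}$ itself (Fact \ref{mm}, and Zafran's non-additivity result quoted in the introduction), so a referee would flag its absence. Fortunately your evaluation scheme proves it verbatim: for $\mu_{1},\mu_{2}\in\mathscr{C}$ and $\psi\in\mathfrak{M}(M(\mathbb{T}))$, either $\psi$ annihilates $M_{0}(\mathbb{T})$, or $\psi|_{M_{0}(\mathbb{T})}$ is evaluation at some $n\in\mathbb{Z}$ and $\psi(\mu_{1}+\mu_{2})=\widehat{\mu_{1}}(n)+\widehat{\mu_{2}}(n)$, or assertion (1) gives $\psi(\mu_{1})=\psi(\mu_{2})=0$; hence $\sigma(\mu_{1}+\mu_{2})=\widehat{\mu_{1}+\mu_{2}}(\mathbb{Z})\cup\{0\}$ --- state this explicitly. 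Two smaller points: the equality $\mathfrak{M}(\mathscr{C})=\mathbb{Z}$ should also be read topologically, and your idempotents $e_{n}$ settle that as well (testing a convergent net against $e_{n}$ shows the Gelfand topology restricted to $\mathbb{Z}$ is discrete); and the extension of a nonzero character from a closed ideal to the ambient algebra, used twice, is the standard formula $\widetilde{\varphi}(\rho)=\varphi(\rho\ast\tau)/\varphi(\tau)$ for any $\tau$ with $\varphi(\tau)\neq 0$, which deserves a citation (\cite{ek}) rather than a bare assertion.
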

We would like to show that all measures from the Zafran's ideal $\mathscr{C}$ are spectrally reasonable. This result is known (see Theorem 20 in $\cite{ow}$) but we will prove it again in greater generality. To do so, we need to remind the definition of joint spectrum and basic properties of multi-variable functional calculus.
\begin{de}
Let $A$ be a unital, commutative Banach algebra and let $\mathfrak{M}(A)$ be its maximal ideal space. Then, for $x,y\in A$ we define a joint spectrum $\sigma(x,y)$ as follows:
\begin{equation*}
\sigma(x,y)=\{(\varphi(x),\varphi(y)):\varphi\in\mathfrak{M}(A)\}\subset\mathbb{C}^{2}.
\end{equation*}
\end{de}
It is now time to cite the theorem on multi-variable functional calculus (for a proof consult $\cite{ek}$ or $\cite{z}$).
\begin{tw}
Let $A$ be a unital, commutative Banach algebra. Then, for $x,y\in A$ and a holomorphic function $f$ of two variables defined on an open set $U\subset\mathbb{C}^{2}$ containing $\sigma(x,y)$ there exists an element $f(x,y)\in A$ such that
\begin{equation*}
\varphi(f(x,y))=f(\varphi(x),\varphi(y)),\text{ for all $\varphi\in\mathfrak{M}(A)$}.
\end{equation*}
\end{tw}
We will introduce now an auxiliary definition which will be useful in the sequel.
\begin{de}
Let $\mu,\nu\in M(\mathbb{T})$.
\begin{itemize}
  \item We say that $\sigma(\mu,\nu)$ is natural, if
    \begin{equation*}
    \sigma(x,y)=\overline{\{(\widehat{\mu}(n),\widehat{\nu}(n)):n\in\mathbb{Z}\}}.
    \end{equation*}
  \item We say that $\mu$ is \textbf{highly reasonable}, if $\sigma(\mu,\nu)$ is natural for every $\nu\in\mathscr{N}$.
\end{itemize}
\end{de}
Let us prove a very simple fact relating our new definitions with old ones.
\begin{fakt}\label{hr}
If $\mu\in M(\mathbb{T})$ is highly reasonable then $\mu$ is spectrally reasonable.
\end{fakt}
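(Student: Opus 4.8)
The plan is to verify the defining equality $\sigma(\mu+\nu)=\overline{\widehat{(\mu+\nu)}(\mathbb{Z})}$ for an arbitrary $\nu\in\mathscr{N}$ by reducing the single spectrum of the sum to the joint spectrum $\sigma(\mu,\nu)$, which is natural by the high reasonability hypothesis. As recalled in the introduction, the inclusion $\overline{\widehat{(\mu+\nu)}(\mathbb{Z})}\subset\sigma(\mu+\nu)$ holds automatically, so the entire task is the reverse containment.

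First I would fix $\nu\in\mathscr{N}$ and observe that for every $\varphi\in\mathfrak{M}(M(\mathbb{T}))$ one has $\varphi(\mu+\nu)=\varphi(\mu)+\varphi(\nu)$ since $\varphi$ is linear; consequently $\sigma(\mu+\nu)$ is exactly the image of the joint spectrum $\sigma(\mu,\nu)$ under the continuous (indeed polynomial) map $f\colon\mathbb{C}^{2}\to\mathbb{C}$, $f(z,w)=z+w$. Thus $\sigma(\mu+\nu)=f(\sigma(\mu,\nu))$, and since $\mathfrak{M}(M(\mathbb{T}))$ is compact the joint spectrum is compact, hence so is its continuous image.

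Next I would invoke the assumption that $\mu$ is highly reasonable, which yields $\sigma(\mu,\nu)=\overline{S}$ where $S=\{(\widehat{\mu}(n),\widehat{\nu}(n)):n\in\mathbb{Z}\}$. By continuity of $f$ we have $f(\overline{S})\subset\overline{f(S)}$, and the right-hand set is precisely $\overline{\{\widehat{\mu}(n)+\widehat{\nu}(n):n\in\mathbb{Z}\}}=\overline{\widehat{(\mu+\nu)}(\mathbb{Z})}$. Chaining these relations gives $\sigma(\mu+\nu)=f(\overline{S})\subset\overline{\widehat{(\mu+\nu)}(\mathbb{Z})}$, which is the missing inclusion; combined with the trivial one it forces $\mu+\nu\in\mathscr{N}$, and as $\nu$ was arbitrary we conclude $\mu\in\mathscr{S}$.

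Honestly I do not expect a genuine obstacle here: the statement is essentially a transcription of the naturality of the joint spectrum through the linear map $(z,w)\mapsto z+w$, so no multi-variable functional calculus is actually needed for this particular reduction. The only points requiring care are the elementary set-theoretic inclusion $f(\overline{S})\subset\overline{f(S)}$ valid for continuous $f$, and the identification of the single spectrum $\sigma(\mu+\nu)$ as the continuous image of the joint spectrum $\sigma(\mu,\nu)$.
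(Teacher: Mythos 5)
Your proof is correct and follows essentially the same route as the paper: both deduce $\mu+\nu\in\mathscr{N}$ from the naturality of the joint spectrum $\sigma(\mu,\nu)$ together with continuity of the map $(z,w)\mapsto z+w$. The only (harmless) difference is that the paper phrases this via the two-variable holomorphic functional calculus applied to $f(x,y)=x+y$ and a sequential approximation argument, whereas you rightly observe that for this particular $f$ the functional calculus is vacuous --- $f(\mu,\nu)=\mu+\nu$ and $\varphi(\mu+\nu)=\varphi(\mu)+\varphi(\nu)$ by linearity of characters --- so the set-theoretic inclusion $f(\overline{S})\subset\overline{f(S)}$ suffices.
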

\begin{proof}
Let $\nu\in\mathscr{N}$. Then by the assumption $\sigma(\mu,\nu)$ is natural. We define $f:\mathbb{C}^{2}\mapsto\mathbb{C}$ by the formula $f(x,y)=x+y$ - it is clearly an entire function and our lemma will be proved if we show $f(\mu,\nu)\in\mathscr{N}$. Using the definition of functional calculus we have
\begin{equation*}
\varphi(f(\mu,\nu))=f(\varphi(\mu),\varphi(\nu))\text{ for all $\varphi\in\mathfrak{M}(M(\mathbb{T}))$}.
\end{equation*}
Now, since $(\varphi(\mu),\varphi(\nu))\in\sigma(\mu,\nu)$ there exists a sequence of integers $(n_{k})_{k=1}^{\infty}$ such that
\begin{equation*}
\lim_{k\rightarrow\infty}(\widehat{\mu}(n_{k}),\widehat{\nu}(n_{k}))=(\varphi(\mu),\varphi(\nu)).
\end{equation*}
Once again, by the equation defining the action of the functional calculus we obtain
\begin{gather*}
\varphi(f(\mu,\nu))=f(\varphi(\mu),\varphi(\nu))=\\
=\lim_{k\rightarrow\infty}f(\widehat{\mu}(n_{k}),\widehat{\nu}(n_{k}))=\lim_{k\rightarrow\infty}\widehat{f(\mu,\nu)}(n_{k}),
\end{gather*}
for all $\varphi\in\mathfrak{M}(M(\mathbb{T}))$ which finishes the proof.
\end{proof}
We also need a lemma which clarifies the nature of isolated points in the joint spectrum.
\begin{lem}\label{izz}
Let $\mu,\nu\in M(\mathbb{T})$. If $(\lambda_{1},\lambda_{2})\in\sigma(\mu,\nu)$ is an isolated point of $\sigma(\mu,\nu)$ then there exists $n\in\mathbb{Z}$ such that $(\lambda_{1},\lambda_{2})=(\widehat{\mu}(n),\widehat{\nu}(n))$.
\end{lem}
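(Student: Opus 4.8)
The plan is to use the two-variable functional calculus to build an element $p\in M(\mathbb{T})$ whose Gelfand transform is the indicator of the isolated point, and then to derive the conclusion from the injectivity of the Fourier-Stieltjes transform.

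First I would exploit that $\sigma(\mu,\nu)$ is compact, being a continuous image of the compact space $\mathfrak{M}(M(\mathbb{T}))$, together with the hypothesis that $(\lambda_{1},\lambda_{2})$ is isolated. This makes $\{(\lambda_{1},\lambda_{2})\}$ relatively open and closed in $\sigma(\mu,\nu)$, so that the complementary piece $\sigma(\mu,\nu)\setminus\{(\lambda_{1},\lambda_{2})\}$ is again compact and disjoint from the point. I would therefore pick disjoint open sets $U_{1},U_{2}\subset\mathbb{C}^{2}$ with $(\lambda_{1},\lambda_{2})\in U_{1}$ and $\sigma(\mu,\nu)\setminus\{(\lambda_{1},\lambda_{2})\}\subset U_{2}$, and define $f\colon U_{1}\cup U_{2}\to\mathbb{C}$ by $f\equiv 1$ on $U_{1}$ and $f\equiv 0$ on $U_{2}$. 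Being locally constant, $f$ is holomorphic on the open neighborhood $U_{1}\cup U_{2}$ of $\sigma(\mu,\nu)$, so the multi-variable functional calculus applies and produces $p:=f(\mu,\nu)\in M(\mathbb{T})$ satisfying $\varphi(p)=f(\varphi(\mu),\varphi(\nu))$ for every $\varphi\in\mathfrak{M}(M(\mathbb{T}))$.

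Next I would read off two facts about $p$. Evaluating at the integer characters gives $\widehat{p}(n)=f(\widehat{\mu}(n),\widehat{\nu}(n))$; since $(\widehat{\mu}(n),\widehat{\nu}(n))$ always lies in $\sigma(\mu,\nu)\subset U_{1}\cup U_{2}$ and $f$ takes only the values $0$ and $1$ there, each coefficient $\widehat{p}(n)$ equals $1$ exactly when $(\widehat{\mu}(n),\widehat{\nu}(n))\in U_{1}$ and equals $0$ otherwise. Because $U_{1}$ meets the spectrum only at $(\lambda_{1},\lambda_{2})$, the equality $\widehat{p}(n)=1$ forces $(\widehat{\mu}(n),\widehat{\nu}(n))=(\lambda_{1},\lambda_{2})$. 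On the other hand $p\neq 0$: choosing $\varphi_{0}\in\mathfrak{M}(M(\mathbb{T}))$ that realizes the point, that is $(\varphi_{0}(\mu),\varphi_{0}(\nu))=(\lambda_{1},\lambda_{2})$, we obtain $\varphi_{0}(p)=f(\lambda_{1},\lambda_{2})=1\neq 0$.

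Finally I would invoke the uniqueness theorem (injectivity of the Fourier-Stieltjes transform on $M(\mathbb{T})$): as $p\neq 0$, its coefficients cannot all vanish, so there is some $n\in\mathbb{Z}$ with $\widehat{p}(n)\neq 0$, hence $\widehat{p}(n)=1$, and by the preceding paragraph $(\widehat{\mu}(n),\widehat{\nu}(n))=(\lambda_{1},\lambda_{2})$, which is exactly the assertion. The only step demanding care is the construction of $f$ and the check that the functional calculus is legitimately applicable; once the isolated point has been separated from the rest of the compact spectrum this is routine, and the genuine conceptual content is merely that a nonzero measure cannot have an identically vanishing Fourier-Stieltjes transform.
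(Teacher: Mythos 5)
Your proof is correct and takes essentially the same approach as the paper's: both separate the isolated point from the rest of the compact joint spectrum by disjoint open sets, apply the two-variable holomorphic functional calculus to the locally constant indicator function to produce a measure whose Fourier--Stieltjes coefficients detect the point, and conclude via the injectivity of the Fourier--Stieltjes transform. The only difference is organizational: the paper argues by contradiction (assuming no $n$ works, so that $\widehat{\tau}(n)=0$ for all $n$ forces $\tau=0$, contradicting $\varphi(\tau)=1$), whereas you argue directly that $p\neq 0$ and hence some coefficient equals $1$.
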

\begin{proof}
Since $(\lambda_{1},\lambda_{2})$ is an isolated point there exists two open sets $U_{1},U_{2}\subset\mathbb{C}^{2}$ such that $U_{1}\cap\sigma(\mu,\nu)=\{(\lambda_{1},\lambda_{2})\}$, $\sigma(\mu,\nu)\subset U_{1}\cup U_{2}$ and $U_{1}\cap U_{2}=\emptyset$. Suppose now on the contrary that $(\lambda_{1},\lambda_{2})\neq (\widehat{\mu}(n),\widehat{\nu}(n))$ for every $n\in\mathbb{Z}$. Then we define a function $f:U_{1}\cup U_{2}\mapsto\mathbb{C}$ by the formula
\begin{equation*}
f(z)=\left\{\begin{array}{c}
         1\text{ for }z\in U_{1}, \\
         0\text{ for }z\in U_{2}.
       \end{array}\right.
\end{equation*}
Clearly, $f$ is a holomorphic function on $U_{1}\cup U_{2}$ and applying the theorem on multi-variable functional calculus we obtain a measure $\tau:=f(\mu,\nu)$ such that
\begin{equation*}
\varphi(\tau)=f(\varphi(\mu),\varphi(\nu)),\text{ for all $\varphi\in\mathfrak{M}(M(\mathbb{T}))$}.
\end{equation*}
From the definition of $f$ we have $\widehat{\tau}(n)=f(\widehat{\mu}(n),\widehat{\nu}(n))=0$ for every $n\in\mathbb{Z}$ which implies $\tau=0$. On the other hand, there exists $\varphi\in\mathfrak{M}(M(\mathbb{T}))$ satisfying $(\varphi(\mu),\varphi(\nu))=(\lambda_{1},\lambda_{2})$. This shows
\begin{equation*}
\varphi(\tau)=f(\varphi(\mu),\varphi(\nu))=f(\lambda_{1},\lambda_{2})=1,
\end{equation*}
which is a contradiction.
\end{proof}
We formulate a corrolary which is contained in the Zafran's paper but which also follows immediately from our lemma by putting $\nu=0$.
\begin{cor}\label{iz}
Let $\mu\in M(\mathbb{T})$. If $\lambda\in\sigma(\mu)$ is an isolated point of $\sigma(\mu)$, then $\lambda\in\widehat{\mu}(\mathbb{Z})$.
\end{cor}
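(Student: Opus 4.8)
The plan is to invoke Lemma~\ref{izz} with the second measure taken to be $\nu=0$. The key observation is that since $\varphi(0)=0$ for every $\varphi\in\mathfrak{M}(M(\mathbb{T}))$, the joint spectrum degenerates to
\[
\sigma(\mu,0)=\{(\varphi(\mu),0):\varphi\in\mathfrak{M}(M(\mathbb{T}))\}=\sigma(\mu)\times\{0\}.
\]
Thus the two-variable picture simply recovers the one-variable spectrum sitting on the first coordinate axis, and an integer $n$ realising a point $(\lambda,0)$ will be exactly an integer realising $\lambda\in\sigma(\mu)$.

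Next I would verify that isolation is preserved under this embedding. If $\lambda$ is an isolated point of $\sigma(\mu)$, then I can choose an open set $V\subset\mathbb{C}$ with $V\cap\sigma(\mu)=\{\lambda\}$; the set $V\times\mathbb{C}$ is then an open neighbourhood of $(\lambda,0)$ in $\mathbb{C}^{2}$ whose intersection with $\sigma(\mu)\times\{0\}$ is precisely $\{(\lambda,0)\}$. Hence $(\lambda,0)$ is an isolated point of $\sigma(\mu,0)$.

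Finally, I would apply Lemma~\ref{izz} to the pair $(\mu,0)$ at the isolated point $(\lambda,0)$: it yields an integer $n\in\mathbb{Z}$ with $(\lambda,0)=(\widehat{\mu}(n),\widehat{0}(n))=(\widehat{\mu}(n),0)$, so that $\lambda=\widehat{\mu}(n)\in\widehat{\mu}(\mathbb{Z})$, as required.

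I do not foresee any genuine obstacle here, since the entire analytic content has already been packed into Lemma~\ref{izz} (and ultimately into the multi-variable functional calculus used there). The only point demanding a line of thought is the elementary topological fact that an isolated point of $\sigma(\mu)$ remains isolated after $\sigma(\mu)$ is embedded as $\sigma(\mu)\times\{0\}$ in $\mathbb{C}^{2}$; everything else is a direct substitution $\nu=0$.
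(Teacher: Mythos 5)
Your proposal is correct and is exactly the paper's intended argument: the authors derive Corollary~\ref{iz} from Lemma~\ref{izz} precisely ``by putting $\nu=0$'', and you have merely written out the routine details (the identification $\sigma(\mu,0)=\sigma(\mu)\times\{0\}$ and the preservation of isolation under this embedding) that the paper leaves implicit. No gap and no genuinely different route.
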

We are ready now to prove the main theorem of this section.
\begin{tw}
$\mathscr{C}\subset\mathscr{S}$.
\end{tw}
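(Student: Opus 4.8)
The plan is to deduce the inclusion from Fact~\ref{hr}: it suffices to show that every $\mu\in\mathscr{C}$ is highly reasonable, i.e. that $\sigma(\mu,\nu)$ is natural for each $\nu\in\mathscr{N}$. Write $P=\{(\widehat{\mu}(n),\widehat{\nu}(n)):n\in\mathbb{Z}\}$ and $K=\overline{P}$. Since each evaluation $\mu\mapsto\widehat{\mu}(n)$ is a character of $M(\mathbb{T})$ and $\sigma(\mu,\nu)$ is compact (being a continuous image of $\mathfrak{M}(M(\mathbb{T}))$), the inclusion $K\subset\sigma(\mu,\nu)$ is immediate; the whole difficulty lies in the reverse inclusion $\sigma(\mu,\nu)\subset K$.

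First I would describe the characters. Because $M_{0}(\mathbb{T})$ is a closed ideal in $M(\mathbb{T})$, any $\varphi\in\mathfrak{M}(M(\mathbb{T}))$ either vanishes identically on $M_{0}(\mathbb{T})$ or restricts to an element $\psi\in\mathfrak{M}(M_{0}(\mathbb{T}))$ with $\varphi|_{M_{0}(\mathbb{T})}=\psi$. In the latter case Zafran's theorem applies: if $\psi\notin\mathbb{Z}$ then $\psi(\mu)=0$ (as $\mu\in\mathscr{C}$), while if $\psi$ is evaluation at some $n\in\mathbb{Z}$ then, by uniqueness of the extension, $\varphi$ is evaluation at $n$ on all of $M(\mathbb{T})$. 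This gives the dichotomy: for every $\varphi$ either $(\varphi(\mu),\varphi(\nu))=(\widehat{\mu}(n),\widehat{\nu}(n))\in P$, or $\varphi(\mu)=0$. Consequently $\sigma(\mu,\nu)\subset P\cup(\{0\}\times\sigma(\nu))$, and any point of $\sigma(\mu,\nu)\setminus K$ must have the form $(0,\lambda)$ with $\lambda\in\sigma(\nu)=\overline{\widehat{\nu}(\mathbb{Z})}$.

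The hard part will be ruling out such a stray point $(0,\lambda)$, and here the isolated-point Lemma~\ref{izz} is the key device. Assuming $(0,\lambda)\notin K$, I would fix $\varepsilon>0$ with the ball around $(0,\lambda)$ missing $P$, so that for every $n$ either $|\widehat{\mu}(n)|\ge\varepsilon$ or $|\widehat{\nu}(n)-\lambda|\ge\varepsilon$. Since $\mu\in M_{0}(\mathbb{T})$ forces $\widehat{\mu}(n)\to 0$, the second alternative holds for all large $|n|$; hence only finitely many $n$ place $\widehat{\nu}(n)$ near $\lambda$, which shows $\lambda$ is an \emph{isolated} point of $\sigma(\nu)$ and is attained, $\lambda=\widehat{\nu}(m)$. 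Combining these two facts I would check that a small box $\{|z|<\varepsilon\}\times\{|w-\lambda|<\delta\}$ meets $\sigma(\mu,\nu)$ only in $(0,\lambda)$, so that $(0,\lambda)$ is isolated in the joint spectrum. Then Lemma~\ref{izz} produces an integer $n_{0}$ with $(0,\lambda)=(\widehat{\mu}(n_{0}),\widehat{\nu}(n_{0}))\in P\subset K$, a contradiction.

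This yields $\sigma(\mu,\nu)=K$, i.e. $\sigma(\mu,\nu)$ is natural, so $\mu$ is highly reasonable and Fact~\ref{hr} finishes the argument. I expect the two subtle points to be the clean justification of the character dichotomy from the ideal structure together with Zafran's theorem, and the verification that the ``attained but not approached from infinity'' value $\lambda=\widehat{\nu}(m)$ really gives an isolated point of $\sigma(\mu,\nu)$ so that Lemma~\ref{izz} can be invoked.
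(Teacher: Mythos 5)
Your proposal is correct and follows essentially the same route as the paper's proof: reduction via Fact~\ref{hr}, the computation $\sigma(\mu,\nu)\subset\{(\widehat{\mu}(n),\widehat{\nu}(n)):n\in\mathbb{Z}\}\cup\left(\{0\}\times\sigma(\nu)\right)$ from Zafran's theorem together with the ideal decomposition of $\mathfrak{M}(M(\mathbb{T}))$, and Lemma~\ref{izz} to dispose of stray points $(0,\lambda)$. The only divergence is organizational: where the paper treats accumulation points of $\sigma(\mu,\nu)$ directly (splitting into subcases according to which coordinate sequence takes infinitely many values), you show by an $\varepsilon$-separation argument that any stray point outside $\overline{\{(\widehat{\mu}(n),\widehat{\nu}(n)):n\in\mathbb{Z}\}}$ would be isolated in the joint spectrum, using the same two ingredients ($\widehat{\mu}(n)\rightarrow 0$ from $\mu\in M_{0}(\mathbb{T})$ and $\sigma(\nu)=\overline{\widehat{\nu}(\mathbb{Z})}$ from $\nu\in\mathscr{N}$) before invoking Lemma~\ref{izz}.
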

\begin{proof}
Let $\mu\in\mathscr{C}$ and $\nu\in\mathscr{N}$. By Fact $\ref{hr}$ it is enough to show that $\sigma(\mu,\nu)$ is natural. Since multiplicative-linear functionals extends from ideals we have the following decomposition of $\mathfrak{M}(M(\mathbb{T}))$ (check $\cite{ek}$):
\begin{gather*}
\mathfrak{M}(M(\mathbb{T}))=\mathfrak{M}(M_{0}(\mathbb{T}))\cup h(M_{0}(\mathbb{T}))\\
\text{ where }h(M_{0}(\mathbb{T}))=\{\varphi\in\mathfrak{M}(M(\mathbb{T})):\varphi|_{M_{0}(\mathbb{T})}\equiv 0\}.
\end{gather*}
Using the above fact and the theorem of Zafran we get
\begin{gather*}
\sigma(\mu,\nu)=\{(\varphi(\mu),\varphi(\nu)):\varphi\in\mathfrak{M}(M(\mathbb{T}))\}=\\
=\{(\varphi(\mu),\varphi(\nu)):\varphi\in\mathfrak{M}(M_{0}(\mathbb{T}))\}\cup\{(\varphi(\mu),\varphi(\nu)):\varphi\in h(M_{0}(\mathbb{T}))\}=\\
=\{(\varphi(\mu),\varphi(\nu)):\varphi\in\mathfrak{M}(M_{0}(\mathbb{T}))\setminus\mathbb{Z}\}\cup\{(\widehat{\mu}(n),\widehat{\nu}(n)):n\in\mathbb{Z}\}\cup\\
\cup\{(\varphi(\mu),\varphi(\nu)):\varphi\in h(M_{0}(\mathbb{T}))\}=\\
=\{(\widehat{\mu}(n),\widehat{\nu}(n)):n\in\mathbb{Z}\}\cup\{(0,\varphi(\nu)):\varphi\in\mathfrak{M}(M(\mathbb{T}))\setminus\mathbb{Z}\}.
\end{gather*}
Let us take a point $(0,\varphi(\nu))\in\sigma(\mu,\nu)$ for $\varphi\in\mathfrak{M}(M(\mathbb{T}))\setminus\mathbb{Z}$. If $(0,\varphi(\nu))$ is an isolated point of $\sigma(\mu,\nu)$ then by Lemma $\ref{izz}$ we are done because $(0,\varphi(\nu))\in\{(\widehat{\mu}(n),\widehat{\nu}(n)):n\in\mathbb{Z}\}$.
In the second case (the point $(0,\varphi(\nu))$ is an accumulation point of $\sigma(\mu,\nu)$) there exists a sequence $(\psi_{n})\subset\mathfrak{M}(M(\mathbb{T}))$ such that
\begin{equation*}
\lim_{n\rightarrow\infty}(\psi_{n}(\mu),\psi_{n}(\nu))=(0,\varphi(\nu)).
\end{equation*}
By the assumption at least one of the sequences $(\psi_{n}(\mu))_{n=1}^{\infty}$, $(\psi_{n}(\nu))_{n=1}^{\infty}$ consists of infinitely many different numbers. If the sequence $(\psi_{n}(\mu))_{n=1}^{\infty}$ consists of infinitely many different numbers then by the theorem of Zafran we can assume, without losing the generality, that $(\psi_{n})\subset\mathbb{Z}\subset\mathfrak{M}(M(\mathbb{T}))$ and the proof is finished. In the second case the sequence $(\psi_{n}(\nu))_{n=1}^{\infty}$ has infinitely many values and we see that $\varphi(\nu)$ is not an isolated point of $\sigma(\nu)$. Then, since $\nu\in\mathscr{N}$ we can find a sequence of distinct integers $(n_{k})_{k=1}^{\infty}$ such that $\widehat{\nu}(n_{k})\rightarrow\varphi(\nu)$ as $k\rightarrow\infty$. But $\mu$ belongs to $\mathscr{C}$ and in particular $\mu\in M_{0}(\mathbb{T})$ which gives
\begin{equation*}
\lim_{k\rightarrow\infty}(\widehat{\mu}(n_{k}),\widehat{\nu}(n_{k}))=(0,\varphi(\nu))
\end{equation*}
and finishes the proof.
\end{proof}
\section{Discrete measures are not reasonable}
In this section we will prove that there are no spectrally reasonable discrete measures except constant multiples of $\delta_{0}$ which seems to be quite surprising. We start with the following lemma which is a partial converse of the fact that the functional calculus maps measures with natural spectra to measures with natural spectra.

\begin{lem}\label{row}
Let $\mu\in M(\mathbb{T})$ and let $f$ be a holomorphic function defined on some open neighborhood of $\sigma(\mu)$. Assume that $f:\sigma(\mu)\mapsto f(\sigma(\mu))$ bijective. Then, if $f(\mu)\in\mathscr{N}$, we have $\mu\in\mathscr{N}$.
\end{lem}
\begin{proof}
Let us take $\lambda\in\sigma(\mu)$. Then by the assumption for $f(\lambda)\in f(\sigma(\mu))=\sigma(f(\mu))$ there exists a sequence $(n_{k})_{k\in\mathbb{N}}$ such that
\begin{equation*}
\lim_{k\rightarrow\infty}\widehat{f(\mu)}(n_{k})=\lim_{k\rightarrow\infty}f(\widehat{\mu}(n_{k}))=f(\lambda).
\end{equation*}
The sequence $(\widehat{\mu}(n_{k}))_{k\in\mathbb{N}}$ is a bounded sequence of complex numbers and so we can choose a convergent subsequence
\begin{equation*}
\lim_{l\rightarrow\infty}\widehat{\mu}(n_{k_{l}})=\alpha\in\mathbb{C}.
\end{equation*}
Since holomorphic functions are continuous we have
\begin{equation*}
\lim_{l\rightarrow\infty}\widehat{f(\mu)}(n_{k_{l}})=\lim_{l\rightarrow\infty}f(\widehat{\mu}(n_{k_{l}}))=f(\alpha).
\end{equation*}
Hence $f(\alpha)=f(\lambda)$ and by injectivity of $f$ we have $\lambda=\alpha\in\overline{\widehat{\mu}(\mathbb{Z})}$ which finishes the proof.
\end{proof}
The next proposition is inspired by the results of Hatori and Sato from \cite{hs}.
\begin{prop}\label{dn}
Let $\mu\in M(\mathbb{T})$ and let $r:=r(\mu)$. Then, for every measure $\nu\in M(\mathbb{T})$ such that
\begin{equation}\label{du}
\overline{\widehat{\nu}(\mathbb{Z})}=r\overline{\mathbb{D}}
\end{equation}
and
\begin{equation}\label{zer}
\mu\ast\nu=0,
\end{equation}
we have $\mu+\nu\in\mathscr{N}$.
\end{prop}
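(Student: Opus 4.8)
The plan is to show that both $\overline{\widehat{(\mu+\nu)}(\mathbb{Z})}$ and $\sigma(\mu+\nu)$ coincide with the disk $r\overline{\mathbb{D}}$; one may assume $r>0$, since $r=0$ forces $\overline{\widehat{\nu}(\mathbb{Z})}=\{0\}$, hence $\nu=0$, and the statement is trivial. First I would record the two faces of the hypothesis $\mu\ast\nu=0$: at the level of Fourier--Stieltjes coefficients it says $\widehat{\mu}(n)\widehat{\nu}(n)=0$ for every $n\in\mathbb{Z}$, so the frequency supports are disjoint and each $\widehat{(\mu+\nu)}(n)$ equals either $\widehat{\mu}(n)$ or $\widehat{\nu}(n)$; at the level of $\mathfrak{M}(M(\mathbb{T}))$ it says $\varphi(\mu)\varphi(\nu)=0$ for every $\varphi$. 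Combining the first observation with $|\widehat{\mu}(n)|\le r(\mu)=r$ and the hypothesis $\overline{\widehat{\nu}(\mathbb{Z})}=r\overline{\mathbb{D}}$, every coefficient $\widehat{(\mu+\nu)}(n)$ lies in $r\overline{\mathbb{D}}$, while the coefficients coming from the frequencies of $\nu$ (those $n$ with $\widehat{\mu}(n)=0$) already fill $r\overline{\mathbb{D}}$ densely; hence $\overline{\widehat{(\mu+\nu)}(\mathbb{Z})}=r\overline{\mathbb{D}}$. Since $\overline{\widehat{(\mu+\nu)}(\mathbb{Z})}\subset\sigma(\mu+\nu)$ is automatic, the whole statement reduces to the reverse inclusion $\sigma(\mu+\nu)\subset r\overline{\mathbb{D}}$.

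The clean engine for the spectrum is the factorisation identity that $\mu\ast\nu=0$ produces: for every $\lambda\in\mathbb{C}$,
\begin{equation*}
(\lambda\delta_{0}-\mu)\ast(\lambda\delta_{0}-\nu)=\lambda(\lambda\delta_{0}-(\mu+\nu)).
\end{equation*}
Because $M(\mathbb{T})$ is commutative, for $\lambda\neq 0$ the right-hand side is invertible exactly when both factors on the left are, and this yields $\sigma(\mu+\nu)=\sigma(\mu)\cup\sigma(\nu)$ (the value $0$ lies in each of these spectra once $\nu\neq 0$, as then $\mu$ cannot be invertible). Since $r(\mu)=r$ gives $\sigma(\mu)\subset r\overline{\mathbb{D}}$, the target inclusion $\sigma(\mu+\nu)\subset r\overline{\mathbb{D}}$ is equivalent to the single spectral statement $\sigma(\nu)\subset r\overline{\mathbb{D}}$, that is $r(\nu)=r$.

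To establish $\sigma(\nu)\subset r\overline{\mathbb{D}}$ I would pass to the joint spectrum $\sigma(\mu,\nu)$. Since $\varphi(\mu)\varphi(\nu)=0$, this set lies on the union of the two coordinate axes $(\sigma(\mu)\times\{0\})\cup(\{0\}\times\sigma(\nu))$, and its projection onto the second coordinate is exactly $\sigma(\nu)$. Suppose $w_{0}\in\sigma(\nu)$ with $|w_{0}|>r$; then $(0,w_{0})\in\sigma(\mu,\nu)$, and because $|w_{0}|>0$ this point is separated from the first axis. If $(0,w_{0})$ is an isolated point of $\sigma(\mu,\nu)$, then Lemma~\ref{izz} forces $(0,w_{0})=(\widehat{\mu}(n),\widehat{\nu}(n))$ for some $n$, whence $w_{0}=\widehat{\nu}(n)\in\overline{\widehat{\nu}(\mathbb{Z})}=r\overline{\mathbb{D}}$, contradicting $|w_{0}|>r$. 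So any such $w_{0}$ must be a non-isolated point of $\sigma(\mu,\nu)$, approached by further spectral values $(0,w_{j})$ with $w_{j}\in\sigma(\nu)$.

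The main obstacle is precisely this accumulation case: Lemma~\ref{izz} disposes of the isolated Wiener--Pitt values of $\nu$ lying outside $r\overline{\mathbb{D}}$, but it does not by itself exclude a non-isolated Wiener--Pitt ``bulge'' of $\sigma(\nu)$ beyond radius $r$, and indeed the conclusion is \emph{equivalent} to the absence of such a bulge. This is where the full strength of the hypothesis $\overline{\widehat{\nu}(\mathbb{Z})}=r\overline{\mathbb{D}}$ (the values of $\widehat{\nu}$ are dense throughout the whole disk, not merely bounded by $r$) together with the orthogonality $\mu\ast\nu=0$ must enter; I would try to separate the part of $\sigma(\mu,\nu)$ with $|w|>r$ from the rest by a holomorphic idempotent, exactly as in the proof of Lemma~\ref{izz}, and to contradict the density of the Fourier--Stieltjes coefficients in $r\overline{\mathbb{D}}$. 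I note finally that if $\nu$ is already known to have natural spectrum the difficulty evaporates, since then $\sigma(\nu)=\overline{\widehat{\nu}(\mathbb{Z})}=r\overline{\mathbb{D}}$ and the factorisation identity closes the argument at once.
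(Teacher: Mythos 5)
Your reduction is sound and, modulo packaging, it is the paper's own: the paper derives $\sigma(\mu+\nu)\cup\{0\}=\sigma(\mu)\cup\sigma(\nu)\cup\{0\}$ directly from the fact that $\varphi(\mu)\varphi(\nu)=0$ for every $\varphi\in\mathfrak{M}(M(\mathbb{T}))$ (your factorisation identity $(\lambda\delta_{0}-\mu)\ast(\lambda\delta_{0}-\nu)=\lambda(\lambda\delta_{0}-(\mu+\nu))$ encodes the same information), and it treats the coefficient side exactly as you do, including the removal of the possibly isolated point $0$ from $\overline{(\widehat{\mu}+\widehat{\nu})(\mathbb{Z})}\cup\{0\}$. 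So you have correctly isolated the crux: everything comes down to the inclusion $\sigma(\nu)\subset r\overline{\mathbb{D}}$, i.e. $r(\nu)\leq r$, and your use of Lemma \ref{izz} correctly shows that only a non-isolated ``bulge'' of $\sigma(\nu)$ beyond radius $r$ could obstruct this.

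The gap you flag in the accumulation case is genuine, and you should know that the paper does not close it either: in the displayed chain the paper asserts $\sigma(\mu)\cup\sigma(\nu)=r\overline{\mathbb{D}}$ citing only (\ref{du}), but (\ref{du}) yields merely $r\overline{\mathbb{D}}=\overline{\widehat{\nu}(\mathbb{Z})}\subset\sigma(\nu)$; the reverse inclusion tacitly uses $\sigma(\nu)=\overline{\widehat{\nu}(\mathbb{Z})}$, that is $\nu\in\mathscr{N}$, which is not among the hypotheses. Moreover no argument can fill this, since (\ref{du}) and (\ref{zer}) do not exclude the bulge. Sketch: take $\mu=R=\prod_{k}(1+\cos(4^{k}t))$, so $r=1$ and $\mathrm{supp}\,\widehat{R}\subset 4\mathbb{Z}$; let $\nu_{1}=\frac{\delta_{0}-\delta_{\pi}}{2}\ast\frac{\delta_{\alpha}+\delta_{\beta}}{2}$ as in the paper, so that $\widehat{\nu_{1}}$ vanishes on the even integers and its values on the odd integers are dense in $\overline{\mathbb{D}}$; let $S=\prod_{k}(1+\cos(16^{k}t))$, which again has $\mathrm{supp}\,\widehat{S}\subset 4\mathbb{Z}$ and, by Graham's theorem (used in the paper for arbitrary ratio $l$), a spectral value $\lambda_{0}\notin\overline{\widehat{S}(\mathbb{Z})}$; since the compact set $\overline{\widehat{S}(\mathbb{Z})}\cup\{\lambda_{0}\}$ has connected complement, Runge's theorem provides a polynomial $p$ with $p(0)=0$, $|p|\leq 1$ on $\overline{\widehat{S}(\mathbb{Z})}$ and $|p(\lambda_{0})|>3$. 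Put $\nu=\nu_{1}+T_{2}(p(S))$, with $T_{2}$ the automorphism from Section 4. Then $\widehat{\nu}$ is supported off $4\mathbb{Z}$, so $\mu\ast\nu=0$, all values of $\widehat{\nu}$ lie in $\overline{\mathbb{D}}$ and the odd ones are dense there, so $\overline{\widehat{\nu}(\mathbb{Z})}=\overline{\mathbb{D}}$; yet spectral mapping and the fact that $T_{2}$ preserves spectra give a character $\varphi$ with $|\varphi(\nu)|\geq|p(\lambda_{0})|-\|\nu_{1}\|>2$, and since $\varphi(\nu)\neq 0$ forces $\varphi(\mu)=0$, we get $|\varphi(\mu+\nu)|>2$ while $\overline{(\widehat{\mu}+\widehat{\nu})(\mathbb{Z})}=\overline{\mathbb{D}}$, so $\mu+\nu\notin\mathscr{N}$. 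The proposition therefore needs the additional hypothesis $\sigma(\nu)\subset r\overline{\mathbb{D}}$ --- for instance $\nu\in\mathscr{N}$, exactly as your final sentence proposes --- and this is harmless for the paper, since in all its applications $\nu$ is discrete and $M_{d}(\mathbb{T})\subset\mathscr{N}$. With that hypothesis added, your argument is complete and coincides with the (repaired) proof in the paper.
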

\begin{proof}
There exists, of course, $n\in\mathbb{Z}$ such that $\widehat{\mu}(n)=0$ (otherwise from $(\ref{zer})$, $\nu=0$ which is excluded by $(\ref{du})$), also from $(\ref{zer})$, for every $\varphi\in\mathfrak{M}(M(\mathbb{T}))$ we have
\begin{equation}\label{jd}
\varphi(\mu)=0\text{ or }\varphi(\nu)=0
\end{equation}
which gives
\begin{gather*}
\sigma(\mu)=\{\varphi(\mu):\varphi\in\mathfrak{M}(\mathbb{T})\}=\{\varphi(\mu):\varphi(\nu)=0\}\cup\{0\},
\\
\sigma(\nu)=\{\varphi(\nu):\varphi(\mu)=0\}\cup\{0\}
\end{gather*}
Applying consecutively $(\ref{jd})$, the last assertion and $(\ref{du})$ we obtain
\begin{gather*}
\sigma(\mu+\nu)\cup\{0\}=\{\varphi(\mu+\nu):\varphi\in\mathfrak{M}(M(\mathbb{T}))\}\cup\{0\}=\\
\{\varphi(\mu):\varphi(\nu)=0\}\cup\{\varphi(\nu):\varphi(\mu)=0\}\cup\{0\}=\sigma(\mu)\cup\sigma(\nu)=r\overline{\mathbb{D}}.
\end{gather*}
The spectrum of an element in a unital Banach algebra is closed so $\sigma(\mu+\nu)=r\overline{\mathbb{D}}$. Analogously,
\begin{equation*}
\left(\widehat{\mu}+\widehat{\nu}\right)(\mathbb{Z})\cup\{0\}=\widehat{\mu}(\mathbb{Z})\cup\widehat{\nu}(\mathbb{Z})\cup\{0\}.
\end{equation*}
Hence,
\begin{equation*}
r\overline{\mathbb{D}}=\overline{\widehat{\nu}(\mathbb{Z})}\subset \overline{\left(\widehat{\mu}+\widehat{\nu}\right)(\mathbb{Z})\cup\{0\}}.
\end{equation*}
Now, $0$ is not an isolated point of $\overline{\left(\widehat{\mu}+\widehat{\nu}\right)(\mathbb{Z})\cup\{0\}}$ and we get
\begin{equation*}
r\overline{\mathbb{D}}\subset\overline{\left(\widehat{\mu}+\widehat{\nu}\right)(\mathbb{Z})\cup\{0\}}=\\
\overline{\left(\widehat{\mu}+\widehat{\nu}\right)(\mathbb{Z})}\subset\sigma(\mu+\nu)=r\overline{\mathbb{D}}.
\end{equation*}
This precisely means
\begin{equation*}
\sigma(\mu+\nu)=\overline{\left(\widehat{\mu}+\widehat{\nu}\right)(\mathbb{Z})},
\end{equation*}
which finishes the proof.
\end{proof}
The following lemma is also suitable for our investigations.
\begin{lem}\label{zn}
Let $\mu,\nu\in\mathscr{N}$ be such that $\mu\ast\nu=0$. Then
$\mu+\nu\in \mathscr{N}$.
\end{lem}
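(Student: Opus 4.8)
The plan is to follow the template of Proposition \ref{dn}, replacing its concrete disc structure by purely spectral arguments. First I would dispose of the degenerate cases: if $\mu=0$ or $\nu=0$ there is nothing to prove, so assume both are non-zero. Then neither is invertible (were $\mu$ invertible, $\mu\ast\nu=0$ would force $\nu=0$), so $0\in\sigma(\mu)$ and $0\in\sigma(\nu)$; since $\mu,\nu\in\mathscr{N}$ this also gives $0\in\overline{\widehat{\mu}(\mathbb{Z})}\cap\overline{\widehat{\nu}(\mathbb{Z})}$.

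The key structural input from $\mu\ast\nu=0$ is that $\varphi(\mu)\varphi(\nu)=0$ for every $\varphi\in\mathfrak{M}(M(\mathbb{T}))$ and $\widehat{\mu}(n)\widehat{\nu}(n)=0$ for every $n\in\mathbb{Z}$. Exactly as in Proposition \ref{dn} this yields $\sigma(\mu)=\{\varphi(\mu):\varphi(\nu)=0\}\cup\{0\}$ together with the analogous identity for $\nu$, and hence, splitting each functional according to which factor it annihilates, the set identity $\sigma(\mu+\nu)\cup\{0\}=\sigma(\mu)\cup\sigma(\nu)$. The same bookkeeping on the integers gives $\overline{\widehat{(\mu+\nu)}(\mathbb{Z})}\cup\{0\}=\overline{\widehat{\mu}(\mathbb{Z})}\cup\overline{\widehat{\nu}(\mathbb{Z})}$, where I use that the closure of a union of two sets is the union of their closures and that $0$ already lies in each closure. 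Feeding in the naturality hypotheses $\sigma(\mu)=\overline{\widehat{\mu}(\mathbb{Z})}$ and $\sigma(\nu)=\overline{\widehat{\nu}(\mathbb{Z})}$, the two right-hand sides coincide, so I obtain $\sigma(\mu+\nu)\cup\{0\}=\overline{\widehat{(\mu+\nu)}(\mathbb{Z})}\cup\{0\}$.

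At this point the inclusion $\overline{\widehat{(\mu+\nu)}(\mathbb{Z})}\subseteq\sigma(\mu+\nu)$ is automatic, so the only thing that could separate the two sets is the point $0$; concretely $\sigma(\mu+\nu)\subseteq\overline{\widehat{(\mu+\nu)}(\mathbb{Z})}\cup\{0\}$, and it remains to show that if $0\in\sigma(\mu+\nu)$ then in fact $0\in\overline{\widehat{(\mu+\nu)}(\mathbb{Z})}$. I expect this last step to be the main obstacle, since it is precisely where Proposition \ref{dn} could appeal to $0$ lying in the interior of a full disc, a convenience we no longer have. The clean substitute is the isolated/accumulation dichotomy furnished by Corollary \ref{iz}: if $0$ is an isolated point of $\sigma(\mu+\nu)$, then Corollary \ref{iz} produces $n$ with $\widehat{(\mu+\nu)}(n)=0$, so $0\in\overline{\widehat{(\mu+\nu)}(\mathbb{Z})}$; and if $0$ is an accumulation point of $\sigma(\mu+\nu)$, then there are points of $\sigma(\mu+\nu)\setminus\{0\}$ arbitrarily close to $0$, all of which lie in the closed set $\overline{\widehat{(\mu+\nu)}(\mathbb{Z})}$ by the inclusion just noted, whence $0$ belongs to that closed set as well. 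Either way $0\in\overline{\widehat{(\mu+\nu)}(\mathbb{Z})}$, the spurious point disappears, and $\sigma(\mu+\nu)=\overline{\widehat{(\mu+\nu)}(\mathbb{Z})}$, that is, $\mu+\nu\in\mathscr{N}$.
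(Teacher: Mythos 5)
Your proof is correct and takes essentially the same route as the paper: the same bookkeeping from $\varphi(\mu)\varphi(\nu)=0$ yields the key identity $\sigma(\mu+\nu)\cup\{0\}=\overline{\left(\widehat{\mu}+\widehat{\nu}\right)(\mathbb{Z})}\cup\{0\}$, and the only possible discrepancy, at the point $0$, is eliminated via Corollary \ref{iz}. Your isolated/accumulation dichotomy for $0\in\sigma(\mu+\nu)$ is merely a reorganization of the paper's two cases (whether or not $0\in\overline{\left(\widehat{\mu}+\widehat{\nu}\right)(\mathbb{Z})}$), with the same content, and your preliminary observation that $0\in\sigma(\mu)\cap\sigma(\nu)$ for non-zero $\mu,\nu$ is harmless but not needed.
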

\begin{proof}
Once again, for every $\varphi\in\mathfrak{M}(M(\mathbb{T}))$, we have
\begin{equation*}
\varphi(\mu)=0\text{ or }\varphi(\nu)=0,
\end{equation*}
which gives
\begin{equation*}
\sigma(\mu+\nu)\cup\{0\}=\sigma(\mu)\cup\sigma(\nu)\cup\{0\}.
\end{equation*}
In the same manner,
\begin{equation*}
\left(\widehat{\mu}+\widehat{\nu}\right)(\mathbb{Z})\cup\{0\}=\widehat{\mu}(\mathbb{Z})\cup\widehat{\nu}(\mathbb{Z})\cup\{0\}.
\end{equation*}
Now, we obtain
\begin{equation*}
\overline{\left(\widehat{\mu}+\widehat{\nu}\right)(\mathbb{Z})}\cup\{0\}=\overline{\widehat{\mu}(\mathbb{Z})}\cup\overline{\widehat{\nu}}\cup\{0\}
=\sigma(\mu)\cup\sigma(\nu)\cup\{0\}=\sigma(\mu+\nu)\cup\{0\}.
\end{equation*}
If $0\in\overline{\left(\widehat{\mu}+\widehat{\nu}\right)(\mathbb{Z})}$ then we are done because
\begin{equation*}
\sigma(\mu+\nu)\cup\{0\}=\overline{\left(\widehat{\mu}+\widehat{\nu}\right)(\mathbb{Z})}\cup\{0\}
=\overline{\left(\widehat{\mu}+\widehat{\nu}\right)(\mathbb{Z})}\subset\sigma(\mu+\nu),
\end{equation*}
which shows that $0\in\sigma(\mu+\nu)$ and provides $\sigma(\mu+\nu)=\overline{\left(\widehat{\mu}+\widehat{\nu}\right)(\mathbb{Z})}$.
\\
If $0\notin \overline{\left(\widehat{\mu}+\widehat{\nu}\right)(\mathbb{Z})}$, then $0\notin\sigma(\mu+\nu)$ as otherwise $\{0\}=\sigma(\mu+\nu)\setminus\overline{\left(\widehat{\mu}+\widehat{\nu}\right)(\mathbb{Z})}$ which is impossible since the latter set does not contain isolated points provided it is non-empty (see Corrolary $\ref{iz}$).
\end{proof}
We are able to prove now that $\delta_{\alpha}\notin\mathscr{S}$ for $\alpha\in\left(\mathbb{Q}/2\pi\mathbb{Z}\right)\setminus\{0\}$ (the subgroup $\left(\mathbb{Q}/2\pi\mathbb{Z}\right)$ consists of all torsion elements in $\mathbb{T}$).
\begin{tw}
Let $\alpha\in\left(\mathbb{Q}/2\pi\mathbb{Z}\right)\setminus\{0\}$. Then
$\delta_{\alpha}\notin\mathscr{S}$.
\end{tw}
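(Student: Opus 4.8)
The plan is to exhibit a single measure $\nu\in\mathscr{N}$ with $\delta_{\alpha}+\nu\notin\mathscr{N}$; by the very definition of $\mathscr{S}$ this gives $\delta_{\alpha}\notin\mathscr{S}$. Write $\alpha=2\pi p/q$ with $\gcd(p,q)=1$ and $q\geq 2$, and put $\omega=e^{2\pi i/q}$. The structural feature I would exploit is that $\delta_{\alpha}$ has finite order, $\delta_{\alpha}^{\ast q}=\delta_{0}$, so $\sigma(\delta_{\alpha})$ consists of $q$-th roots of unity. The associated spectral idempotents $e_{j}=\frac{1}{q}\sum_{k=0}^{q-1}\omega^{-jk}\delta_{k\alpha}\in M_{d}(\mathbb{T})$ satisfy $e_{i}\ast e_{j}=\delta_{ij}e_{i}$, $\sum_{j}e_{j}=\delta_{0}$ and $\delta_{\alpha}\ast e_{j}=\omega^{j}e_{j}$. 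Each ideal $A_{j}:=e_{j}\ast M(\mathbb{T})$ consists of the measures whose Fourier--Stieltjes transform is supported in one fixed residue class modulo $q$, and via modulation by a character together with a $q$-fold rescaling of frequencies one has a Banach-algebra isomorphism $A_{j}\cong M(\mathbb{T})$ which preserves spectra and the \emph{set} of Fourier--Stieltjes values. Finally, on the fibre $\mathfrak{M}_{j}:=\{\varphi\in\mathfrak{M}(M(\mathbb{T})):\varphi(\delta_{\alpha})=\omega^{j}\}$ one has $\{\varphi(\mu):\varphi\in\mathfrak{M}_{j}\}=\sigma_{A_{j}}(e_{j}\ast\mu)$.

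Next I would import two classical objects through these isomorphisms. Into $A_{0}$ I transport a Wiener--Pitt measure, obtaining $\nu_{0}\in A_{0}$ with $0\in\sigma(\nu_{0})$ but $0\notin P_{0}:=\overline{\widehat{\nu_{0}}(\mathbb{Z})}$ (so $P_{0}$ is bounded away from $0$); after rescaling I may assume its spectral radius $r:=r(\nu_{0})$ satisfies $r<\min_{j\neq 0}|1-\omega^{j}|=2\sin(\pi/q)$. Into some $A_{m}$ with $m\neq 0$ I transport a measure whose Fourier--Stieltjes values fill a disc, obtaining $\nu_{1}\in A_{m}$ with $\overline{\widehat{\nu_{1}}(\mathbb{Z})}=r\overline{\mathbb{D}}$. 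Since $\nu_{0}$ and $\nu_{1}$ live in orthogonal components, $\nu_{0}\ast\nu_{1}=e_{0}\ast e_{m}\ast\nu_{0}\ast\nu_{1}=0$, and the radius of the disc equals $r(\nu_{0})$; hence Proposition \ref{dn}, applied with $\mu=\nu_{0}$, yields $\nu:=\nu_{0}+\nu_{1}\in\mathscr{N}$. It is essential here to invoke Proposition \ref{dn} rather than Lemma \ref{zn}, because $\nu_{0}$ is deliberately chosen \emph{not} to have natural spectrum.

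It then remains to check that $\delta_{\alpha}+\nu\notin\mathscr{N}$ by producing a spectral value outside the closure of the transform. As the Fourier supports of $\nu_{0}$ and $\nu_{1}$ lie in the residue classes $0$ and $c_{m}$ modulo $q$, a direct computation gives
\begin{equation*}
\overline{\widehat{(\delta_{\alpha}+\nu)}(\mathbb{Z})}=(1+P_{0})\cup(\omega^{m}+r\overline{\mathbb{D}})\cup\{\omega^{j}:j\neq 0,m\}.
\end{equation*}
On the other hand $0\in\sigma_{A_{0}}(\nu_{0})$ provides $\varphi\in\mathfrak{M}_{0}$ with $\varphi(\nu_{0})=0$; since $\varphi(\nu_{1})=0$ on $\mathfrak{M}_{0}$, this gives $\varphi(\delta_{\alpha}+\nu)=1+0+0=1$, so $1\in\sigma(\delta_{\alpha}+\nu)$. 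But $1\notin 1+P_{0}$ (as $0\notin P_{0}$), $1\notin\omega^{m}+r\overline{\mathbb{D}}$ (as $|1-\omega^{m}|\geq 2\sin(\pi/q)>r$), and $1$ is not a non-trivial root of unity; hence $1\notin\overline{\widehat{(\delta_{\alpha}+\nu)}(\mathbb{Z})}$. Thus $\delta_{\alpha}+\nu\notin\mathscr{N}$ while $\nu\in\mathscr{N}$, proving $\delta_{\alpha}\notin\mathscr{S}$.

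The heart of the argument, and the step I expect to be the most delicate to set up cleanly, is the transport of the two classical measures into the components $A_{0}$ and $A_{m}$: I must verify that the isomorphism $A_{j}\cong M(\mathbb{T})$ carries the Wiener--Pitt phenomenon and the disc-filling property faithfully, i.e. that it preserves both the spectrum and the closure of the set of Fourier--Stieltjes values. The quantitative points are comparatively soft: rescaling $\nu_{0}$ decouples the radius $r$ from the gaps $|1-\omega^{j}|$ between roots of unity, and the escaped spectral value lands exactly at the point $1$, which is free of non-trivial roots of unity, so the final separation is automatic. Only $q\geq 2$, guaranteed by $\alpha\neq 0$, is needed so that a component $A_{m}$ with $m\neq 0$ is available.
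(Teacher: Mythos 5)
Your construction is sound and, after one correction, complete; but note a genuine notational slip that, taken literally, would break the last step. As you define it, $P_{0}=\overline{\widehat{\nu_{0}}(\mathbb{Z})}$ \emph{always} contains $0$: since $\nu_{0}\in A_{0}$ and $q\geq 2$, we have $\widehat{\nu_{0}}(n)=0$ for every $n\notin q\mathbb{Z}$, so the condition ``$0\notin P_{0}$'' is unattainable, and with this $P_{0}$ your final exclusion $1\notin 1+P_{0}$ fails. What your computation actually uses is $P_{0}:=\overline{\{\widehat{\nu_{0}}(qn):n\in\mathbb{Z}\}}$, the closed set of values on the supporting residue class, i.e.\ exactly the transform range of the Wiener--Pitt measure before transport, which is indeed bounded away from $0$ (the classical phenomenon: $\inf_{n}|\widehat{\mu}(n)|>0$ yet $0\in\sigma(\mu)$). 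With that reading every step checks: $A_{j}=e_{j}\ast M(\mathbb{T})$ is unital with unit $e_{j}$, modulation by $T_{m}$ followed by the $q$-fold frequency rescaling is an isometric unital isomorphism onto $M(\mathbb{T})$ preserving spectra and on-class transform values (off-class values contribute only the harmless $\{0\}$); characters of $A_{j}$ extend to $\mathfrak{M}_{j}$ via $\mu\mapsto\psi(e_{j}\ast\mu)$, which justifies both the fibre identity and the production of $\varphi\in\mathfrak{M}_{0}$ with $\varphi(\delta_{\alpha}+\nu)=1$; the disc-filling measure to transport exists by Kronecker (e.g.\ $r\,\frac{\delta_{\beta}+\delta_{\gamma}}{2}$ with $\{1,\beta,\gamma\}$ rationally independent); and you correctly observe that Proposition \ref{dn} places no naturality hypothesis on $\mu$, which is precisely why it, and not Lemma \ref{zn}, applies to $\mu=\nu_{0}$, $\nu=\nu_{1}$.

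Your route is genuinely different from the paper's. The paper argues by contradiction with a case analysis on $\alpha$: for $\alpha=\pi$ it perturbs a Riesz product $R=\prod_{k}(1+\cos(4^{k}t))\notin\mathscr{N}$ (whose Fourier support lies in $4\mathbb{Z}$, so $\frac{\delta_{0}-\delta_{\pi}}{2}\ast R=0$) by a Kronecker measure to land in $\mathscr{N}$ via Proposition \ref{dn}, then invokes Lemma \ref{przy} to conclude $R=\mu\ast\frac{\delta_{0}+\delta_{\pi}}{2}\in\mathscr{N}$, a contradiction; odd multiples $\frac{k\pi}{n}$ are handled through $\delta_{k\pi/n}^{\ast n}=\delta_{\pi}$ using that $\mathscr{S}$ is an algebra, and the even case repeats the scheme with the Haar measure of the finite subgroup and Riesz products supported in $l^{n}\mathbb{Z}$. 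You instead treat all rational $\alpha$ uniformly through the spectral idempotents $e_{j}$ of the cyclic group generated by $\alpha$, and you exhibit an explicit witness $\nu\in\mathscr{N}$ with $\delta_{\alpha}+\nu\notin\mathscr{N}$, needing neither Lemma \ref{przy} nor the algebra structure of $\mathscr{S}$ (Theorem \ref{pod}). The trade-off: your input is the full Wiener--Pitt phenomenon (transform bounded below, $0$ in the spectrum), whereas the paper only needs a Riesz product outside $\mathscr{N}$ with lacunary support; both arguments share Proposition \ref{dn} and the Kronecker disc-filling device, and your fibre decomposition is in spirit close to the paper's later use of the annihilating idempotents $\frac{\delta_{0}\pm\delta_{\pi}}{2}$ in the irrational case.
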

\begin{proof}
Let us start from the case $\alpha=\pi$. Assume that
$\delta_{\pi}\in\mathscr{S}$. Then, of course,
$\frac{\delta_{0}+\delta_{\pi}}{2}\in\mathscr{S}$. Consider the
Riesz product
\begin{equation*}
R=\prod_{k=1}^{\infty}\left(1+\cos (4^{k}t)\right).
\end{equation*}
For such $R$ we have
\begin{equation*}
\mathrm{supp}\widehat{R}\subset 4\mathbb{Z}\subset 2\mathbb{Z}\text{ but }R\notin\mathscr{N}\text{ (see $\cite{graham}$)}.
\end{equation*}
Let us define a measure $\mu$ by the formula
\begin{equation*}
\mu=R+\frac{\delta_{0}-\delta_{\pi}}{2}\ast\frac{\delta_{\alpha}+\delta_{\beta}}{2}
\end{equation*}
where $\alpha,\beta\in\mathbb{T}$ are such that the set
$\{1,\alpha,\beta\}$ is linearly independent over $\mathbb{Q}$.
Then, for
$\nu=\frac{\delta_{0}-\delta_{\pi}}{2}\ast\frac{\delta_{\alpha}+\delta_{\beta}}{2}$
we have (this follows easily from the classical Kronecker's theorem, for details consult $\cite{o}$)
\begin{equation*}
\overline{\widehat{\nu}(\mathbb{Z})}=\overline{\mathbb{D}}.
\end{equation*}
Moreover, since $\frac{\delta_{0}-\delta_{\pi}}{2}\ast R=0$ we
obtain from Theorem $\ref{dn}$ that $\mu\in\mathscr{N}$. Now from
the first part of Lemma $\ref{przy}$ we get
$\frac{\delta_{0}+\delta_{\pi}}{2}\ast \mu\in\mathscr{N}$. But
\begin{equation*}
\mu\ast\frac{\delta_{0}+\delta_{\pi}}{2}=R\ast\frac{\delta_{0}+\delta_{\pi}}{2}=R
\end{equation*}
which is a contradiction.
\\
Since for $n\in\mathbb{N}$ we
have $\delta_{\frac{k\pi}{n}}^{\ast n}=\delta_{\pi}$ for any odd
$k$ and the set $\mathscr{S}$ of all reasonable measures is an algebra, we obtain
$\delta_{\frac{k\pi}{n}}\notin\mathscr{S}$ for any odd $k$.
\\
For any $\alpha=\frac{k}{l}\pi$, where $k,l$ are coprime natural
numbers such that $\frac{k}{l}\in (0,2)$ and $k$ is even we
proceed similarly. Let $\nu$ denote the normalized Haar measure of
the finite subgroup generated by $\alpha$ (this subgroup is of order $l$) in $\mathbb{T}$ . Then it is elementary to check that
$\widehat{\nu}(n)=1$ for $n\in l\mathbb{Z}$ and zero otherwise. As
before, we have $\frac{\delta_{0}-\delta_{\alpha}}{2}\ast \nu =0$.
Assume now that $\delta_{\alpha}\in\mathscr{S}$. Then since $\nu$
belongs to the algebra generated by $\delta_{\alpha}$ we have
$\nu\in\mathscr{S}$. We consider the measure $\mu\in\mathbb{T}$
given by the formula
\begin{equation*}
\mu=\prod_{n=1}^{\infty}(1+\cos
(l^{n}t))+\frac{\delta_{0}-\delta_{\alpha}}{2}\ast\frac{\delta_{\beta}+\delta_{\gamma}}{2},
\end{equation*}
where $\beta,\gamma\in\mathbb{T}$ are chosen in such a way that
the set $\{1,\beta,\gamma\}$ is linearly independent over
$\mathbb{Q}$. Basing once again on Theorem $\ref{dn}$ and Lemma
$\ref{przy}$ we obtain a contradiction.
\end{proof}
It is not difficult to extend the last result to the case of
finite sums of Dirac deltas at rational points but we will not provide any details since this case will be covered in Propositon $\ref{ds}$.
\\
In order to go further we need the following theorem of F.Parreu from $\cite{p}$.
\begin{tw}[Parreau]\label{par}
There exists $\tau\in M(\mathbb{T})$ such that $\sigma(\tau)\subset\mathbb{R}$ but $\tau\notin\mathscr{N}$.
\end{tw}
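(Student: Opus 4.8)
The plan is to manufacture a \emph{hermitian} measure exhibiting the Wiener--Pitt phenomenon and then to reshape its spectrum so that it lands on the real line without destroying non-naturality. Note first that for a self-adjoint $\mu=\widetilde{\mu}$ Proposition \ref{mon} gives $\widehat{\mu}(n)\in\mathbb{R}$ for all $n$, so $\overline{\widehat{\mu}(\mathbb{Z})}\subset\mathbb{R}$, while $\sigma(\mu)$ is only forced to be symmetric about $\mathbb{R}$ (here the failure of symmetry of $M(\mathbb{T})$ is essential). Thus a self-adjoint Wiener--Pitt measure automatically has all of its \emph{surplus} spectrum off the real axis, and the whole difficulty is to pull that surplus back onto $\mathbb{R}$ while keeping the measure out of $\mathscr{N}$.

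The mechanism I would use is the holomorphic functional calculus together with Lemma \ref{row}. Suppose I can produce $\mu\notin\mathscr{N}$ whose spectrum $\sigma(\mu)$ is contained in a simple real-analytic arc $\Gamma$ (symmetric about $\mathbb{R}$, e.g.\ a sub-arc of a circle). Choosing a function $f$ that is univalent on an open neighbourhood of $\Gamma$ and maps $\Gamma$ into $\mathbb{R}$ (a conformal straightening of the slit), I set $\tau:=f(\mu)$. The spectral mapping theorem gives $\sigma(\tau)=f(\sigma(\mu))\subset f(\Gamma)\subset\mathbb{R}$, and since $f$ is injective on $\sigma(\mu)$, the contrapositive of Lemma \ref{row} yields $\tau\notin\mathscr{N}$. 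This already produces the desired example, so the entire analytic content is shifted onto the construction of a non-natural measure with \emph{thin} spectrum.

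That construction is the main obstacle, and is exactly why the result deserves to be a named theorem and is here only cited from \cite{p}. Two constraints shape it. By the open mapping theorem no univalent $f$ can flatten a spectrum with nonempty interior, so $\sigma(\mu)$ must be one-dimensional; and by Corollary \ref{iz} the surplus $\sigma(\mu)\setminus\overline{\widehat{\mu}(\mathbb{Z})}$ cannot consist of isolated points, hence is a perfect set. A measure meeting both demands cannot come from $L^{1}(\mathbb{T})+\mathbb{C}\delta_{0}$, whose self-adjoint elements already have natural, real spectrum, so one is forced into the singular world: I would take a self-adjoint independent-power (Riesz-product type) measure, in the spirit of the examples used earlier in this paper and of the Hatori--Sato techniques, and compute its Gelfand spectrum through generalized characters, tuning the parameters so that the spectrum is confined to a real-analytic arc while $\mu$ still fails to have natural spectrum. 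Verifying that this delicate balance can actually be struck is the hard part, after which the functional-calculus step above finishes the argument.
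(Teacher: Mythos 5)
There is a genuine gap, and it is the whole theorem. The paper does not prove this statement at all: it is imported verbatim from Parreau's paper \cite{p}, where the existence of a measure with real spectrum outside $\mathscr{N}$ is established by a long, delicate construction (a generalized Riesz-product-type measure whose generalized characters are analyzed directly so that the spectrum lands in $\mathbb{R}$ while naturality fails). Your proposal correctly identifies and executes only the soft part: if one had $\mu\notin\mathscr{N}$ with $\sigma(\mu)$ contained in a simple arc $\Gamma$, then a function $f$ univalent near $\Gamma$ with $f(\Gamma)\subset\mathbb{R}$, the spectral mapping theorem, and the contrapositive of Lemma \ref{row} would indeed give $\tau=f(\mu)\notin\mathscr{N}$ with $\sigma(\tau)\subset\mathbb{R}$. (This transfer is sound, and amusingly it is the exact inverse of how the paper \emph{uses} Parreau's theorem later: there one passes from real spectrum to circle spectrum via $\exp(i\mu)$ and the same Lemma \ref{row}.) But the input to your reduction --- a non-natural measure whose spectrum is confined to an arc --- is not easier than the theorem itself; by your own functional-calculus argument the two statements are essentially equivalent. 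Your final paragraph concedes this (``Verifying that this delicate balance can actually be struck is the hard part''), so what you have written is a reformulation plus an unproved existence claim, not a proof. The classical Wiener--Pitt examples (e.g.\ the Riesz product $R$ used elsewhere in this paper) have fat spectrum containing a disc, and no amount of one-variable functional calculus flattens that onto $\mathbb{R}$, as you yourself note via the open mapping theorem; so nothing already available in the paper fills the gap.

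Two smaller inaccuracies are worth flagging. First, with the involution as defined in this paper, $\widetilde{\mu}(E)=\overline{\mu(E)}$, self-adjointness means $\mu$ is a real measure and gives $\widehat{\widetilde{\mu}}(n)=\overline{\widehat{\mu}(-n)}$; it does \emph{not} force $\widehat{\mu}(n)\in\mathbb{R}$ (take $\mu=\delta_{\alpha}$), so your opening claim $\overline{\widehat{\mu}(\mathbb{Z})}\subset\mathbb{R}$ holds only for the involution $\mu^{*}(E)=\overline{\mu(-E)}$, not the one used here. Second, your structural observations (the surplus spectrum has no isolated points by Corollary \ref{iz}, the example must be singular) are correct but are constraints on a construction, not steps toward one. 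If the intent is to present this result in the paper, the honest course is what the paper does: cite \cite{p}; a self-contained proof would require reproducing Parreau's construction, which your sketch does not approach.
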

Let $T^{\ast}=\{z\in\mathbb{C}:|z|=1\}$. We state the following theorem whose proof is very similar to Proposition $\ref{dn}$.
\begin{tw}\label{dziu}
Let $\mu\in M(\mathbb{T})$ satisfies $\sigma(\mu)\subset T^{\ast}\cup\{0\}$. Then, for every measure $\nu\in M(\mathbb{T})$ such that
\begin{equation}
\overline{\widehat{\nu}(\mathbb{Z})}=T^{\ast}\cup\{0\}
\end{equation}
and
\begin{equation}
\mu\ast\nu=0,
\end{equation}
we have $\mu+\nu\in\mathscr{N}$.
\end{tw}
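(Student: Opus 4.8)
The plan is to follow the proof of Proposition \ref{dn} almost verbatim, replacing the disc $r\overline{\mathbb{D}}$ by the set $T^{\ast}\cup\{0\}$, and then to treat separately the single feature in which the two situations genuinely differ. First I would record the pointwise consequences of the hypothesis $\mu\ast\nu=0$: since $\nu\neq 0$ there is an $n$ with $\widehat{\nu}(n)\neq 0$, which forces $\widehat{\mu}(n)=0$, so $0\in\widehat{\mu}(\mathbb{Z})\subset\sigma(\mu)$; moreover for every $\varphi\in\mathfrak{M}(M(\mathbb{T}))$ we have $\varphi(\mu)=0$ or $\varphi(\nu)=0$. Splitting the Gelfand space according to which factor vanishes gives, exactly as in Proposition \ref{dn},
\[
\sigma(\mu)=\{\varphi(\mu):\varphi(\nu)=0\}\cup\{0\}
\quad\text{and}\quad
\sigma(\nu)=\{\varphi(\nu):\varphi(\mu)=0\}\cup\{0\},
\]
whence $\sigma(\mu+\nu)\cup\{0\}=\sigma(\mu)\cup\sigma(\nu)$.

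Next I would identify this union. Using $\sigma(\mu)\subset T^{\ast}\cup\{0\}$ together with $\overline{\widehat{\nu}(\mathbb{Z})}=T^{\ast}\cup\{0\}\subset\sigma(\nu)$, and bounding $\sigma(\nu)$ from above just as in Proposition \ref{dn}, I obtain $\sigma(\mu)\cup\sigma(\nu)=T^{\ast}\cup\{0\}$, hence $T^{\ast}\subset\sigma(\mu+\nu)\subset T^{\ast}\cup\{0\}$. The same bookkeeping on the Fourier side yields $(\widehat{\mu}+\widehat{\nu})(\mathbb{Z})\cup\{0\}=\widehat{\mu}(\mathbb{Z})\cup\widehat{\nu}(\mathbb{Z})\cup\{0\}$, and after taking closures $\overline{(\widehat{\mu}+\widehat{\nu})(\mathbb{Z})}\cup\{0\}=T^{\ast}\cup\{0\}$; since $0\notin T^{\ast}$ this gives $T^{\ast}\subset\overline{(\widehat{\mu}+\widehat{\nu})(\mathbb{Z})}\subset T^{\ast}\cup\{0\}$. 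At this stage the entire difficulty has been localized to the single point $0$, and we already know the inclusion $\overline{(\widehat{\mu}+\widehat{\nu})(\mathbb{Z})}\subset\sigma(\mu+\nu)$ holds automatically.

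The main obstacle, and the only real divergence from Proposition \ref{dn}, is that here $0$ is an \emph{isolated} point of $T^{\ast}\cup\{0\}$, so I cannot close the argument by remarking that $0$ fails to be isolated and absorbing the stray $\{0\}$ into the closure. Instead I would argue by cases, in the spirit of Lemma \ref{zn}. If $0\in\overline{(\widehat{\mu}+\widehat{\nu})(\mathbb{Z})}$, then this set equals $T^{\ast}\cup\{0\}$, which is squeezed inside $\sigma(\mu+\nu)\subset T^{\ast}\cup\{0\}$, forcing equality. If $0\notin\overline{(\widehat{\mu}+\widehat{\nu})(\mathbb{Z})}$, then $\overline{(\widehat{\mu}+\widehat{\nu})(\mathbb{Z})}=T^{\ast}$, and I claim $0\notin\sigma(\mu+\nu)$ as well: otherwise $0$ would be an isolated point of $\sigma(\mu+\nu)$ (being at distance one from $T^{\ast}$), and Corollary \ref{iz} would place $0$ in $\widehat{(\mu+\nu)}(\mathbb{Z})\subset\overline{(\widehat{\mu}+\widehat{\nu})(\mathbb{Z})}=T^{\ast}$, a contradiction; hence $\sigma(\mu+\nu)=T^{\ast}=\overline{(\widehat{\mu}+\widehat{\nu})(\mathbb{Z})}$. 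In both cases $\sigma(\mu+\nu)=\overline{(\widehat{\mu}+\widehat{\nu})(\mathbb{Z})}$, i.e.\ $\mu+\nu\in\mathscr{N}$, which is the assertion. The step requiring the most care is precisely this case distinction, since it is where the geometry of $T^{\ast}\cup\{0\}$ (with its isolated origin) replaces the role played by the convexity of the disc in Proposition \ref{dn}, and where Corollary \ref{iz} must be invoked.
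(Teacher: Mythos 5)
Your argument is correct and takes essentially the same route as the paper's proof: you derive the same two identities $\sigma(\mu+\nu)\cup\{0\}=T^{\ast}\cup\{0\}$ and $\overline{\left(\widehat{\mu}+\widehat{\nu}\right)(\mathbb{Z})}\cup\{0\}=T^{\ast}\cup\{0\}$ from $\mu\ast\nu=0$, and you resolve the isolated origin by the same appeal to Corollary \ref{iz}. The only (immaterial) difference is that you split cases on whether $0\in\overline{\left(\widehat{\mu}+\widehat{\nu}\right)(\mathbb{Z})}$, while the paper splits on whether $0\in\sigma(\mu+\nu)$; these are contrapositive rearrangements of the same dichotomy.
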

\begin{proof}
Once again,
\begin{equation*}
\sigma(\mu+\nu)\cup\{0\}=\sigma(\mu)\cup\sigma(\nu)\cup\{0\}=T^{\ast}\cup\{0\}.
\end{equation*}
Also,
\begin{equation*}
\left(\widehat{\mu}+\widehat{\nu}\right)(\mathbb{Z})\cup\{0\}=\widehat{\mu}(\mathbb{Z})\cup\widehat{\nu}(\mathbb{Z})\cup\{0\}.
\end{equation*}
This leads to
\begin{equation*}
T^{\ast}\cup\{0\}=\overline{\widehat{\nu}(\mathbb{Z})}\subset\overline{\left(\widehat{\mu}+\widehat{\nu}\right)(\mathbb{Z})}\cup\{0\}\\
\subset\sigma(\mu+\nu)\cup\{0\}=T^{\ast}\cup\{0\}.
\end{equation*}
Hence,
\begin{equation*}
\overline{\left(\widehat{\mu}+\widehat{\nu}\right)(\mathbb{Z})}\cup\{0\}=\sigma(\mu+\nu)\cup\{0\}=T^{\ast}\cup\{0\}.
\end{equation*}
If $0\in\sigma(\mu+\nu)$, then from Corollary $\ref{iz}$, $0\in \left(\widehat{\mu}+\widehat{\nu}\right)(\mathbb{Z})$ and we are done. Otherwise,
\begin{equation*}
\sigma(\mu+\nu)=T^{\ast}\subset\overline{\left(\widehat{\mu}+\widehat{\nu}\right)(\mathbb{Z})}
\end{equation*}
which finishes the proof.
\end{proof}
Now we can prove the promised theorem.
\begin{tw}
Let $\alpha\notin\left(\mathbb{Q}/2\pi\mathbb{Z}\right)$. Then $\delta_{\alpha}\notin\mathscr{S}$.
\end{tw}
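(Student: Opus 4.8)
The plan is to argue by contradiction: assume $\delta_{\alpha}\in\mathscr{S}$ and produce a measure with non-natural spectrum inside a class that membership $\delta_{\alpha}\in\mathscr{S}$ forces to be natural. First I record the two structural facts about $\delta_{\alpha}$ that drive everything. Since $\alpha$ is not a rational multiple of $2\pi$, the Kronecker (Weyl) equidistribution theorem gives that $\{\widehat{\delta_{\alpha}}(n)\}_{n\in\mathbb{Z}}=\{e^{-in\alpha}\}_{n\in\mathbb{Z}}$ is dense in $T^{\ast}$; as $\delta_{\alpha}\in M_{d}(\mathbb{T})\subset\mathscr{N}$ this gives $\sigma(\delta_{\alpha})=\overline{\widehat{\delta_{\alpha}}(\mathbb{Z})}=T^{\ast}$. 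Moreover $\delta_{\alpha}$ is unitary, $\delta_{\alpha}^{-1}=\delta_{-\alpha}$, and by the second part of Lemma $\ref{przy}$ its inverse again lies in $\mathscr{S}$; thus the whole closed subalgebra generated by $\delta_{\alpha}$ sits inside $\mathscr{S}$, with circular spectrum $T^{\ast}$.

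Next I manufacture the non-natural seed out of Parreau's measure. Take $\tau$ as in Theorem $\ref{par}$, so $\sigma(\tau)\subset\mathbb{R}$ is compact while $\tau\notin\mathscr{N}$. Choosing $c>0$ small enough that $\sigma(c\tau)$ lies in an interval of length $<2\pi$, the entire function $f(z)=e^{iz}$ is injective on $\sigma(c\tau)$, and the spectral mapping theorem gives $\sigma(f(c\tau))=e^{i\sigma(c\tau)}\subset T^{\ast}$. Since $c\tau\notin\mathscr{N}$ (Proposition $\ref{mon}$) and $f$ is injective on $\sigma(c\tau)$, the contrapositive of Lemma $\ref{row}$ yields $\eta_{0}:=f(c\tau)\notin\mathscr{N}$ with $\sigma(\eta_{0})\subset T^{\ast}$. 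In order to be able to invoke the annihilation hypothesis of Theorem $\ref{dziu}$ I then force the value $0$ into the spectrum while retaining non-naturality, convolving $\eta_{0}$ with a nontrivial idempotent $q$ (for instance the normalized Haar measure of a finite subgroup, whose transform is the indicator of a subgroup of $\mathbb{Z}$); this produces a measure $\eta$ with $\sigma(\eta)\subset T^{\ast}\cup\{0\}$ and $0\in\sigma(\eta)$.

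With the seed in hand I invoke Theorem $\ref{dziu}$. Using Kronecker-independent Dirac masses $\delta_{\beta},\delta_{\gamma}$ (as in the rational cases already treated) I build a companion $\nu$ with $\overline{\widehat{\nu}(\mathbb{Z})}=T^{\ast}\cup\{0\}$ and $\eta\ast\nu=0$; Theorem $\ref{dziu}$ then delivers $\eta+\nu\in\mathscr{N}$. The final move is to feed $\eta+\nu$ through the operations legitimized by $\delta_{\alpha}\in\mathscr{S}$ — convolution by $\delta_{\alpha}^{\pm1}$ and by elements of the subalgebra it generates, all of which map $\mathscr{N}$ into $\mathscr{N}$ by the first part of Lemma $\ref{przy}$ — choosing the multiplier so that it annihilates the $\nu$-part but not the non-natural $\eta$-part. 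This recovers a non-natural measure inside $\mathscr{N}$, the desired contradiction, so $\delta_{\alpha}\notin\mathscr{S}$.

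The hard part is exactly this last extraction, and it is where irrationality genuinely bites. For rational $\alpha$ the projection onto the relevant frequencies is performed by the normalized Haar measure of the finite cyclic group $\langle\alpha\rangle$, a nontrivial idempotent lying in the algebra generated by $\delta_{\alpha}$, hence in $\mathscr{S}$. For irrational $\alpha$ this tool vanishes: the group $\langle\alpha\rangle$ is infinite and, since $\sigma(\delta_{\alpha})=T^{\ast}$, the identity theorem forbids any function holomorphic near $T^{\ast}$ from being a nontrivial idempotent, so the subalgebra generated by $\delta_{\alpha}$ contains no nonzero proper projection. The separation of the $\eta$-part from the $\nu$-part therefore cannot be achieved by a single idempotent and must instead be arranged analytically, coupling the circular spectrum of $\delta_{\alpha}$ with the $T^{\ast}\cup\{0\}$ geometry of Theorem $\ref{dziu}$ and the exponential construction of $\eta$ from Parreau's measure. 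Making the exact annihilation $\eta\ast\nu=0$ coexist with a surviving non-natural component is the delicate technical core of the argument.
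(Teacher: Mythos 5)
Your setup coincides with the paper's: Parreau's measure, normalization, the injectivity of $z\mapsto e^{iz}$ on the spectrum, Lemma \ref{row} to get $\exp(i\mu)\notin\mathscr{N}$ with $\sigma(\exp(i\mu))\subset T^{\ast}$, and Theorem \ref{dziu} as the naturality engine. But the final extraction step --- the one you yourself flag as ``the delicate technical core'' --- is exactly the part that is missing, and the route you sketch for it cannot work. You propose to recover the non-natural piece from $\eta+\nu\in\mathscr{N}$ by convolving with elements of the closed algebra generated by $\delta_{\alpha}$ chosen to annihilate the $\nu$-part; as you correctly observe, that algebra contains no nontrivial idempotent (indeed no divisor of zero: $\widehat{\delta_{\alpha}}$ never vanishes and the relevant transforms are limits of polynomials in $e^{-in\alpha}$), so no such annihilating multiplier exists. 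Your proposal thus reduces to an honest admission that the key step is unproved. There are two further mismatches with what the completed argument needs: your companion $\nu$ is built from Kronecker-independent $\delta_{\beta},\delta_{\gamma}$ as in the rational case, whereas the contradiction requires the companion to contain $\delta_{\alpha}$ itself (so that subtracting $\delta_{\alpha}$, legitimate under the hypothesis $\delta_{\alpha}\in\mathscr{S}$, produces a usable factorization --- and here irrationality of $\alpha$ is what gives $\overline{\widehat{\nu}(\mathbb{Z})}=T^{\ast}\cup\{0\}$ via Kronecker); and your intermediate claim that $\eta=\eta_{0}\ast q$ remains non-natural after convolution with an idempotent $q$ is unverified and is in fact never needed.

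The paper's resolution is additive, not convolutive, and uses a symmetry you do not exploit. With $p_{\pm}=\frac{\delta_{0}\pm\delta_{\pi}}{2}$ (complementary idempotents, $p_{+}\ast p_{-}=0$, chosen externally --- not from the algebra generated by $\delta_{\alpha}$), set
\begin{equation*}
\rho_{1}=\exp(i\mu)\ast p_{+}+p_{-}\ast\delta_{\alpha},\qquad
\rho_{2}=\exp(i\mu)\ast p_{-}+p_{+}\ast\delta_{\alpha},
\end{equation*}
so the idempotents attached to $\exp(i\mu)$ and to $\delta_{\alpha}$ are swapped between the two measures. Theorem \ref{dziu} gives $\rho_{1},\rho_{2}\in\mathscr{N}$ (the annihilation $\mu\ast\nu=0$ comes for free from $p_{+}\ast p_{-}=0$). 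Now the hypothesis $\delta_{\alpha}\in\mathscr{S}$ is used additively: $\rho_{i}-\delta_{\alpha}\in\mathscr{N}$, and each difference factors, $\rho_{1}-\delta_{\alpha}=p_{+}\ast(\exp(i\mu)-\delta_{\alpha})$ and $\rho_{2}-\delta_{\alpha}=p_{-}\ast(\exp(i\mu)-\delta_{\alpha})$, whence $(\rho_{1}-\delta_{\alpha})\ast(\rho_{2}-\delta_{\alpha})=0$. Lemma \ref{zn} (two orthogonal members of $\mathscr{N}$ have natural sum --- a lemma your outline never invokes) then yields $(\rho_{1}-\delta_{\alpha})+(\rho_{2}-\delta_{\alpha})=\exp(i\mu)-\delta_{\alpha}\in\mathscr{N}$, and adding back $\delta_{\alpha}$ (again using $\delta_{\alpha}\in\mathscr{S}$) gives $\exp(i\mu)\in\mathscr{N}$, contradicting Lemma \ref{row}. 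So the separation you were seeking is never performed: instead of killing the companion, one subtracts $\delta_{\alpha}$, factors, and reassembles via Lemma \ref{zn}. Without this (or an equivalent) mechanism, your proof does not close.
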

\begin{proof}
Let $\mu$ be a Parreu measure. Without losing the generality we can assume that
$\sigma(\mu)\subset [0,2\pi)$ (we multiple original one by a constant and add a constant multiple of $\delta_{0}$ if necessary). Then by Lemma $\ref{row}$ we have $\exp(i\mu)\notin\mathscr{N}$. Moreover, by the spectral mapping theorem we get
$\sigma(\exp(i\mu))\subset T^{\ast}$. Let us consider now
$\rho_{1},\rho_{2}\in M(\mathbb{T})$ defined by the formulas
\begin{gather*}
\rho_{1}=\exp(i\mu)\ast\frac{\delta_{0}+\delta_{\pi}}{2}+\frac{\delta_{0}-\delta_{\pi}}{2}\ast\delta_{\alpha}\\
\rho_{2}=\exp(i\mu)\ast\frac{\delta_{0}-\delta_{\pi}}{2}+\frac{\delta_{0}+\delta_{\pi}}{2}\ast\delta_{\alpha}.
\end{gather*}
Since $(\delta_{0}+\delta_{\pi})\ast (\delta_{0}-\delta_{\pi})=0$
and for
$\nu_{1}=\frac{\delta_{0}-\delta_{\pi}}{2}\ast\delta_{\alpha}$,
$\nu_{2}=\frac{\delta_{0}+\delta_{\pi}}{2}\ast\delta_{\alpha}$ we
have
\begin{equation*}
\overline{\widehat{\nu_{1}}(\mathbb{Z})}=\overline{\widehat{\nu_{2}}(\mathbb{Z})}=T^{\ast}\cup\{0\}
\end{equation*}
we obtain $\rho_{1},\rho_{2}\in\mathscr{N}$ by Theorem
$\ref{dziu}$. Assume, towards the contradiction that
$\delta_{\alpha}\in\mathscr{S}$. Then, by definition
$\rho_{1}-\delta_{\alpha},\rho_{2}-\delta_{\alpha}\in\mathscr{N}$.
But
\begin{gather*}
\rho_{1}-\delta_{\alpha}=\exp(i\mu)\ast\frac{\delta_{0}+\delta_{\pi}}{2}-\frac{\delta_{0}+\delta_{\pi}}{2}\ast\delta_{\alpha}=\frac{\delta_{0}+\delta_{\pi}}{2}\ast(\exp(i\mu)-\delta_{\alpha}),\\
\rho_{2}-\delta_{\alpha}=\exp(i\mu)\ast\frac{\delta_{0}-\delta_{\pi}}{2}-\frac{\delta_{0}-\delta_{\pi}}{2}\ast\delta_{\alpha}=\frac{\delta_{0}-\delta_{\pi}}{2}\ast(\exp(i\mu)-\delta_{\alpha}).
\end{gather*}
Hence
$(\rho_{1}-\delta_{\alpha})\ast(\rho_{2}-\delta_{\alpha})=0$. It
follows from Lemma $\ref{zn}$ that
$(\rho_{1}-\delta_{\alpha})+(\rho_{2}-\delta_{\alpha})\in\mathscr{N}$.
However,
\begin{gather*}
(\rho_{1}-\delta_{\alpha})+(\rho_{2}-\delta_{\alpha})=\\
=\frac{\delta_{0}+\delta_{\pi}}{2}\ast(\exp(i\mu)-\delta_{\alpha})+\frac{\delta_{0}-\delta_{\pi}}{2}\ast(\exp(i\mu)-\delta_{\alpha})=\exp(i\mu)-\delta_{\alpha}.
\end{gather*}
And now, since we have assumed that
$\delta_{\alpha}\in\mathscr{S}$ we obtain
$\exp(i\mu)\in\mathscr{N}$ which is a contradiction.
\end{proof}
Let us recall that for $\mu\in M(\mathbb{T})$ and $\tau\in\mathbb{T}$ the convolution $\mu\ast\delta_{\tau}$ is a shift of measure given by the formula
\begin{equation*}
(\mu\ast\delta_{\tau})(E)=\mu(E-\tau)\text{ for all Borel sets }E\subset\mathbb{T}.
\end{equation*}
It follows from our results that even such a simple operation does not preserve the naturality of the spectrum.
\begin{cor}
For every $\tau\in\mathbb{T}\setminus\{0\}$ there exists $\mu\in\mathscr{N}$ such that $\mu\ast\delta_{\tau}\notin\mathscr{N}$.
\end{cor}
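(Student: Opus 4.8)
The plan is to argue by contraposition and reduce everything to the fact, established in the two preceding theorems, that $\delta_\tau\notin\mathscr{S}$ for every $\tau\in\mathbb{T}\setminus\{0\}$. So I fix $\tau\neq 0$ and suppose, towards a contradiction, that $\delta_\tau\ast\mu\in\mathscr{N}$ for every $\mu\in\mathscr{N}$; I will deduce $\delta_\tau\in\mathscr{S}$, contradicting those theorems. The key point is that naturality of the spectrum is an additive notion (membership in $\mathscr{S}$ concerns sums), while the corollary is multiplicative, so the heart of the argument is the passage between the two, made possible by the invertibility of $\delta_\tau$ (with $\delta_\tau^{-1}=\delta_{-\tau}$).

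The one non-formal ingredient is a symmetry letting me pass from shifts by $\tau$ to shifts by $-\tau$. I introduce the reflection operator $R$ on $M(\mathbb{T})$ given by $(R\mu)(E)=\mu(-E)$; it is an isometric algebra automorphism with $R\delta_\tau=\delta_{-\tau}$ and $\widehat{R\mu}(n)=\widehat{\mu}(-n)$. Since $R$ preserves spectra (being an automorphism) and merely permutes the Fourier–Stieltjes coefficients through $n\mapsto -n$, it maps $\mathscr{N}$ onto itself. Applying $R$ to the standing assumption therefore yields its mirror image as well: $\delta_{-\tau}\ast\mu\in\mathscr{N}$ for every $\mu\in\mathscr{N}$.

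With both signs available, I verify $\delta_\tau\in\mathscr{S}$ directly. Take an arbitrary $\rho\in\mathscr{N}$. Then $\delta_{-\tau}\ast\rho\in\mathscr{N}$ by the mirrored assumption, and since adding a scalar multiple of the unit preserves naturality (for $\eta\in\mathscr{N}$ one has $\overline{\widehat{\eta+c\delta_0}(\mathbb{Z})}=c+\overline{\widehat{\eta}(\mathbb{Z})}=c+\sigma(\eta)=\sigma(\eta+c\delta_0)$), we get $\delta_0+\delta_{-\tau}\ast\rho\in\mathscr{N}$. Convolving by $\delta_\tau$ and using the original assumption gives $\delta_\tau\ast(\delta_0+\delta_{-\tau}\ast\rho)\in\mathscr{N}$. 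But this measure is exactly
\begin{equation*}
\delta_\tau\ast(\delta_0+\delta_{-\tau}\ast\rho)=\delta_\tau+\delta_\tau\ast\delta_{-\tau}\ast\rho=\delta_\tau+\rho,
\end{equation*}
so $\delta_\tau+\rho\in\mathscr{N}$. As $\rho\in\mathscr{N}$ was arbitrary this means $\delta_\tau\in\mathscr{S}$, the desired contradiction; hence some $\mu\in\mathscr{N}$ must satisfy $\mu\ast\delta_\tau\notin\mathscr{N}$.

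The only genuine obstacle is conceptual rather than computational: converting the additive statement $\delta_\tau\notin\mathscr{S}$ into a convolution statement. The invertibility of $\delta_\tau$, together with the reflection symmetry needed to control the inverse shift $\delta_{-\tau}$, is precisely what bridges the gap; once both shift directions are known to preserve $\mathscr{N}$, the factorization $\delta_\tau+\rho=\delta_\tau\ast(\delta_0+\delta_{-\tau}\ast\rho)$ does the rest. If one prefers to avoid proving the full converse, an alternative is to start from a witness $\nu_0\in\mathscr{N}$ with $\delta_\tau+\nu_0\notin\mathscr{N}$ and split into the cases $\delta_0+\delta_{-\tau}\ast\nu_0\in\mathscr{N}$ (then take $\mu=\delta_0+\delta_{-\tau}\ast\nu_0$, for which $\mu\ast\delta_\tau=\delta_\tau+\nu_0\notin\mathscr{N}$) and its negation (then $\delta_{-\tau}\ast\nu_0\notin\mathscr{N}$, and $\mu=R\nu_0\in\mathscr{N}$ satisfies $\mu\ast\delta_\tau\notin\mathscr{N}$ because its reflection equals $\delta_{-\tau}\ast\nu_0$).
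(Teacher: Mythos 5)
Your proof is correct, and its skeleton is exactly the one the paper points to: the paper omits the proof, saying it repeats the argument of Fact \ref{mm}, i.e.\ exploit the invertibility of $\delta_{\tau}$ (with $\delta_{\tau}^{-1}=\delta_{-\tau}\in\mathscr{N}$), the fact that adding $c\delta_{0}$ preserves naturality, and the factorization $\delta_{\tau}+\rho=(\delta_{0}+\delta_{-\tau}\ast\rho)\ast\delta_{\tau}$, all fed by the preceding theorems asserting $\delta_{\tau}\notin\mathscr{S}$ for $\tau\neq 0$. Your genuine addition is the reflection automorphism $R$, $(R\mu)(E)=\mu(-E)$, and it earns its keep: a verbatim transcription of Fact \ref{mm} runs into a sign asymmetry that the paper's remark glosses over. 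Indeed, starting from $\nu_{0}\in\mathscr{N}$ with $\delta_{\tau}+\nu_{0}\notin\mathscr{N}$, the Fact \ref{mm} dichotomy gives either a witness for $\tau$ (when $\delta_{0}+\delta_{-\tau}\ast\nu_{0}\in\mathscr{N}$) or only the statement for $-\tau$ (when $\delta_{-\tau}\ast\nu_{0}\notin\mathscr{N}$), and since the corollary quantifies over each fixed $\tau$, one cannot stop there; the failure assumption for $\tau$ lets you convolve by $\delta_{\tau}$ but not deconvolve. Your observation that $R$ is an algebra automorphism with $\widehat{R\mu}(n)=\widehat{\mu}(-n)$, hence preserves $\mathscr{N}$ and interchanges the statements for $\tau$ and $-\tau$, closes exactly this gap — both in your main contrapositive version (which upgrades the hypothesis to shifts in both directions and then derives $\delta_{\tau}\in\mathscr{S}$ outright) and in your case-split alternative, where $R\nu_{0}$ serves as the witness in the second case. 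So your write-up is the paper's intended argument plus an explicit repair of a subtlety the paper leaves implicit.
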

We omit the proof of the above corollary since it is a repetition of the arguments given in the proof of Fact $\ref{mm}$.
\\
Passing to sums of Dirac deltas will be done with the aid of some simple automorphisms of
the algebra $M(\mathbb{T})$. Let $m\in\mathbb{Z}$ be fixed and
consider $T_{m}:M(\mathbb{T})\mapsto M(\mathbb{T})$ given by the
following formula on the level of the Fourier-Stieltjes
transform:
\begin{equation*}
\widehat{T_{m}(\mu)}(n)=\widehat{\mu}(n-m)\text{ for all $\mu\in
M(\mathbb{T})$ and $n\in\mathbb{Z}$}.
\end{equation*}
It is clear that $T_{m}$ is an automorphism of the algebra
$M(\mathbb{T})$ and in addition can also be defined as follows (here we use the
Riesz representation theorem to obtain identification
$C(\mathbb{T})^{\ast}\simeq M(\mathbb{T})$)
\begin{equation*}
\int_{\mathbb{T}^{n}}f(t)d(T_{m}(\mu)(t))=\int_{\mathbb{T}}f(t)e^{imt}d\mu(t)\text{
for all $\mu\in M(\mathbb{T})$ and $f\in C(\mathbb{T})$}.
\end{equation*}
The above equation shows that automorphism $T_{m}$ sends a measure
$\mu$ to a measure absolutely continuous with respect to $\mu$
with density $e^{imt}$. This observation is a fundamental one when
we consider discrete measures since for every $\tau\in\mathbb{T}$
we have
\begin{equation*}
\int_{\mathbb{T}^{n}}f(t)d(T_{m}\delta_{\tau}(t))=\int_{\mathbb{T}}f(t)e^{imt}d\delta_{\tau}=f(\tau)e^{im\tau}\text{
for all $f\in C(\mathbb{T})$}
\end{equation*}
which leads to the formula
\begin{equation}
T_{m}\delta_{\tau}=e^{im\tau}\delta_{\tau}.
\end{equation}
Every automorphism of a Banach algebra preserves spectrum but
automorphisms $T_{m}$ also satisfy
$\widehat{(T_{m}\mu)}(\mathbb{Z})=\widehat{\mu}(\mathbb{Z})$ which
immediately proves the next fact.
\begin{fakt}
$\mu\in \mathscr{S}$ if and only if $T_{m}\mu\in\mathscr{S}$ for
every $m\in\mathbb{Z}$.
\end{fakt}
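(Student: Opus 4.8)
The plan is to prove only the nontrivial implication, that $\mu\in\mathscr{S}$ forces $T_{m}\mu\in\mathscr{S}$ for every $m\in\mathbb{Z}$; the reverse implication is immediate, because $T_{0}$ is the identity, so the right-hand condition specialized to $m=0$ already reads $\mu\in\mathscr{S}$. I will also use the elementary fact that $T_{m}^{-1}=T_{-m}$, which one checks on the Fourier--Stieltjes side from $\widehat{T_{m}\mu}(n)=\widehat{\mu}(n-m)$.

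The key preliminary step is to record that each $T_{m}$ maps $\mathscr{N}$ bijectively onto itself. Since $T_{m}$ is a unital automorphism of $M(\mathbb{T})$ it preserves invertibility and hence spectra, giving $\sigma(T_{m}\nu)=\sigma(\nu)$; and the defining relation $\widehat{T_{m}\nu}(n)=\widehat{\nu}(n-m)$ shows that as $n$ runs through $\mathbb{Z}$ the index $n-m$ also runs through all of $\mathbb{Z}$, whence $\widehat{T_{m}\nu}(\mathbb{Z})=\widehat{\nu}(\mathbb{Z})$ and therefore $\overline{\widehat{T_{m}\nu}(\mathbb{Z})}=\overline{\widehat{\nu}(\mathbb{Z})}$. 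Combining the two equalities, naturality of $\nu$ is equivalent to naturality of $T_{m}\nu$; applying this with $-m$ in place of $m$ yields that $T_{m}$ carries $\mathscr{N}$ onto $\mathscr{N}$ in both directions.

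With that in hand I would fix $m\in\mathbb{Z}$, assume $\mu\in\mathscr{S}$, and take an arbitrary $\nu\in\mathscr{N}$. By the preliminary step $T_{-m}\nu\in\mathscr{N}$, so the defining property of $\mathscr{S}$ gives $\mu+T_{-m}\nu\in\mathscr{N}$. Now apply the linear map $T_{m}$: using $T_{m}T_{-m}=\mathrm{id}$ one gets $T_{m}(\mu+T_{-m}\nu)=T_{m}\mu+\nu$, and since $T_{m}$ sends $\mathscr{N}$ into $\mathscr{N}$ this element lies in $\mathscr{N}$. As $\nu\in\mathscr{N}$ was arbitrary, $T_{m}\mu+\nu\in\mathscr{N}$ for all $\nu\in\mathscr{N}$, that is $T_{m}\mu\in\mathscr{S}$.

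I do not anticipate a genuine obstacle: the whole argument reduces to the two structural properties of $T_{m}$ already isolated in the text, namely that it is a spectrum-preserving automorphism and that it permutes the Fourier--Stieltjes coefficients, and the verification that $T_{m}$ preserves $\mathscr{N}$ is the only substantive point. The mildly delicate part is the bookkeeping, i.e. conjugating the established membership $\mu+T_{-m}\nu\in\mathscr{N}$ by $T_{m}$ rather than trying to manipulate $T_{m}\mu+\nu$ directly; once the invariance of $\mathscr{N}$ under every $T_{m}$ is in place, this is purely formal.
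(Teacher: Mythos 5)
Your proof is correct and takes essentially the same approach as the paper: the paper declares the fact ``immediate'' from the two properties that $T_{m}$ is a spectrum-preserving automorphism and that $\widehat{(T_{m}\mu)}(\mathbb{Z})=\widehat{\mu}(\mathbb{Z})$, and your argument is exactly the fleshed-out version of this, with the invariance of $\mathscr{N}$ under each $T_{m}$ as the key step. The conjugation identity $T_{m}\mu+\nu=T_{m}\left(\mu+T_{-m}\nu\right)$ is precisely the bookkeeping the paper leaves to the reader, and your handling of it (including $T_{m}^{-1}=T_{-m}$ and the trivial reverse implication via $m=0$) is complete.
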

We are ready now to prove that finite sums of Dirac deltas are not
spectrally reasonable.
\begin{prop}\label{ds}
All finite sums of Dirac deltas are not spectrally reasonable
except constant multiples of $\delta_{0}$.
\end{prop}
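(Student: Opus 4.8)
The plan is to reduce the general finite sum to the case of a single Dirac delta at a nonzero point, which is already excluded by the two preceding theorems of this section. Write $\mu$ in reduced form as $\mu=\sum_{j=1}^{N}c_{j}\delta_{\tau_{j}}$ with pairwise distinct points $\tau_{1},\dots,\tau_{N}\in\mathbb{T}$ and nonzero coefficients $c_{j}$. If $\mu$ is not a constant multiple of $\delta_{0}$, then at least one of the points, say $\tau_{j_{0}}$, is different from $0$ (either $N\geq 2$, or $N=1$ with $\tau_{1}\neq 0$). Assume towards a contradiction that $\mu\in\mathscr{S}$.

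The heart of the argument is to extract the individual atoms $\delta_{\tau_{j}}$ from $\mu$ using the automorphisms $T_{m}$ together with the algebra structure of $\mathscr{S}$. By the preceding Fact we have $T_{m}\mu\in\mathscr{S}$ for every $m\in\mathbb{Z}$, and from the formula $T_{m}\delta_{\tau}=e^{im\tau}\delta_{\tau}$ we compute
\begin{equation*}
T_{m}\mu=\sum_{j=1}^{N}c_{j}e^{im\tau_{j}}\delta_{\tau_{j}}.
\end{equation*}
Since $\mathscr{S}$ is a linear subspace (Theorem \ref{pod}), every finite combination $\sum_{m}a_{m}T_{m}\mu$ again lies in $\mathscr{S}$; moreover each such combination is supported on $\{\tau_{1},\dots,\tau_{N}\}$ and hence belongs to the $N$-dimensional space $V=\operatorname{span}\{\delta_{\tau_{1}},\dots,\delta_{\tau_{N}}\}$. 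I claim that the vectors $\{T_{m}\mu:m\in\mathbb{Z}\}$ already span all of $V$.

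This spanning claim is the only step requiring a genuine argument, and it is where I expect the main work to lie. In coordinates with respect to the basis $(\delta_{\tau_{j}})$, the vector $T_{m}\mu$ is $(c_{1}e^{im\tau_{1}},\dots,c_{N}e^{im\tau_{N}})$. If these vectors failed to span $V$, there would exist a nonzero $w=(w_{1},\dots,w_{N})$ with $\sum_{j}c_{j}w_{j}e^{im\tau_{j}}=0$ for all $m\in\mathbb{Z}$; evaluating at $m=0,1,\dots,N-1$ and invoking the invertibility of the Vandermonde matrix built from the distinct numbers $e^{i\tau_{1}},\dots,e^{i\tau_{N}}$ forces $c_{j}w_{j}=0$, hence $w_{j}=0$ for all $j$ (as each $c_{j}\neq 0$), a contradiction. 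Therefore $V\subset\mathscr{S}$, and in particular $\delta_{\tau_{j_{0}}}\in\mathscr{S}$ with $\tau_{j_{0}}\neq 0$. This contradicts the two theorems just proved, which together establish $\delta_{\alpha}\notin\mathscr{S}$ for every $\alpha\in\mathbb{T}\setminus\{0\}$, and the proof is complete. The reverse inclusion, that every constant multiple $c\delta_{0}$ is spectrally reasonable, is immediate, since adding $c\delta_{0}$ merely translates both the spectrum and the range of the Fourier--Stieltjes transform of any $\nu\in\mathscr{N}$ by $c$.
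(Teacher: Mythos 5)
Your proof is correct, and it rests on the same two pillars as the paper's own argument: the Fact that the automorphisms $T_{m}$ preserve $\mathscr{S}$ (combined with the linearity of $\mathscr{S}$), and the two preceding theorems excluding $\delta_{\alpha}$ for every $\alpha\in\mathbb{T}\setminus\{0\}$. The implementation, however, is genuinely different in its bookkeeping. The paper runs a minimal-counterexample descent: it takes $N$ minimal, picks a single $m$ with $e^{im\tau_{1}}\neq e^{im\tau_{2}}$, and forms $e^{im\tau_{1}}\mu-T_{m}\mu$, which annihilates the atom at $\tau_{1}$ while keeping the one at $\tau_{2}$, contradicting minimality. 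You instead take $m=0,1,\dots,N-1$ simultaneously and note that the coordinate matrix $\left(c_{j}e^{im\tau_{j}}\right)_{m,j}$ is a Vandermonde matrix in the distinct nodes $e^{i\tau_{1}},\dots,e^{i\tau_{N}}$ composed with an invertible diagonal, so the orbit $\{T_{m}\mu\}$ spans all of $\operatorname{span}\{\delta_{\tau_{1}},\dots,\delta_{\tau_{N}}\}$, and hence \emph{every} individual atom $\delta_{\tau_{j}}$ lies in $\mathscr{S}$. Your one-shot version proves slightly more (all atoms are extracted, not merely a smaller counterexample produced), dispenses with the minimal-$N$ setup and with the paper's normalization $\tau_{k}\neq 0$ (an atom at $0$ is harmless in the Vandermonde argument), at the modest cost of invoking Vandermonde invertibility where the paper needs only one subtraction. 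Your closing remark on the converse is also fine: $c\delta_{0}\in\mathscr{S}$ because $\sigma(\nu+c\delta_{0})=\sigma(\nu)+c$ and $\widehat{(\nu+c\delta_{0})}(\mathbb{Z})=\widehat{\nu}(\mathbb{Z})+c$ for every $\nu\in\mathscr{N}$; the paper obtains this for free from the fact that $\mathscr{S}$ is a unital subalgebra closed under scalar multiples.
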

\begin{proof}
Towards the contradiction let us take
\begin{equation*}
\mu=\sum_{k=1}^{N}a_{k}\delta_{\tau_{k}}\in\mathscr{S}\text{ where
$N\in\mathbb{N}$, $a_{k}\in\mathbb{C}$ and $\tau_{k}\in\mathbb{T}$
for $k\in\{1,\ldots,N\}$}.
\end{equation*}
We can assume that $\tau_{k}\neq \tau_{l}$ for $k\neq l$
and $a_{k}\neq 0$ for $k\in\{1,\ldots,N\}$. Without losing
generality $\tau_{k}\neq 0$ for $k\in\{1,\ldots,N\}$. Also we pick
the minimal $N$ for which such measure exists (the corresponding
measure is also denoted by $\mu$). Keeping in mind that $\tau_{1}\neq\tau_{2}$ we can
choose $m\in\mathbb{Z}$ for which $e^{im\tau_{1}}\neq
e^{im\tau_{2}}$. Then, for $\nu:=T_{m}(\mu)\in\mathscr{S}$ we have
\begin{equation*}
\nu=\sum_{k=1}^{N}a_{k}e^{im\tau_{k}}\delta_{\tau_{k}}\neq \mu.
\end{equation*}
Now,
\begin{equation*}
\mathscr{S}\ni
e^{im\tau_{1}}\mu-\nu=\sum_{k=2}^{N}a_{k}(e^{im\tau_{1}}-e^{im\tau_{k}})\delta_{\tau_{k}}=\sum_{k=1}^{N-1}a_{k+1}(e^{im\tau_{1}}-e^{im\tau_{k+1}})\delta_{\tau_{k+1}}\neq
0
\end{equation*}
which contradicts the choice of $N$.
\end{proof}
Dealing with infinite sums of Dirac deltas requires the next lemma which is also of independent interest. To formulate this result we introduce the following notation: for an algebraic polynomial $f$ with complex coefficients of the form
\begin{equation*}
f(z)=a_{n}z^{n}+a_{n-1}z^{n-1}+\ldots+a_{1}z+a_{0}
\end{equation*}
we will write $|f|_{1}$ for the sum of modulus of all coefficients, i.e.
\begin{equation*}
|f|_{1}:=|a_{0}|+|a_{1}|+\ldots+|a_{n-1}|+|a_{n}|.
\end{equation*}
\begin{lem}
Let $\alpha,\beta$ be two different complex numbers of modulus $1$. Then there exists an algebraic polynomial $f_{\alpha,\beta}$ with complex coefficients such that $f_{\alpha,\beta}(\alpha)=0$ and $f_{\alpha,\beta}(\beta)=|f|_{1}$.
\end{lem}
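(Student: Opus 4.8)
The plan is to exploit the elementary bound $|f(\beta)|\le |f|_1$, valid for any polynomial and any $\beta$ with $|\beta|=1$, and then to engineer equality in the sharpest possible way. Writing $f(z)=\sum_{k}a_k z^k$, we have $|f(\beta)|=\bigl|\sum_k a_k\beta^k\bigr|\le\sum_k|a_k|\,|\beta|^k=|f|_1$, and equality together with $f(\beta)=|f|_1$ forces every nonzero term $a_k\beta^k$ to have the same argument and that argument to be $0$, i.e. each $a_k\beta^k$ must be a nonnegative real number. This dictates the ansatz $a_k=c_k\overline{\beta}^{\,k}$ with $c_k\ge 0$: for coefficients of this form one automatically gets $f(\beta)=\sum_k c_k=|f|_1$, no matter what the values $c_k$ are. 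It then remains only to arrange $f(\alpha)=0$. Setting $w:=\alpha\overline{\beta}$ (so $|w|=1$ and $w\ne 1$, since $\alpha\ne\beta$), the requirement $f(\alpha)=\sum_k c_k w^k=0$ says precisely that the polynomial $g(z):=\sum_k c_k z^k$, which has nonnegative real coefficients, vanishes at $w$; and then $f_{\alpha,\beta}(z):=g(\overline{\beta}z)$ is the polynomial we seek. Thus the lemma reduces to a single clean statement: for every $w\in T^{\ast}$ with $w\ne 1$ there is a nonzero polynomial with nonnegative real coefficients vanishing at $w$.

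For this reduced problem I would argue geometrically. A nontrivial relation $\sum_k c_k w^k=0$ with $c_k\ge 0$ exists if and only if $0$ lies in the convex hull of finitely many of the unit vectors $w^k$ ($k\ge 0$), the coefficients $c_k$ being exactly the (normalized) convex-combination weights. If $w$ is a root of unity of order $q\ge 2$, one simply takes $g(z)=1+z+\cdots+z^{q-1}$, since $g(w)=(w^{q}-1)/(w-1)=0$. If $w$ is not a root of unity, the powers $\{w^k:k\ge 0\}$ are dense in $T^{\ast}$ by Kronecker's theorem, so I can choose exponents $k_1,k_2,k_3$ with $w^{k_1},w^{k_2},w^{k_3}$ lying near the vertices of an equilateral triangle inscribed in $T^{\ast}$; these three points are then contained in no open half-plane through the origin, whence $0$ is an interior point of their triangle and is a strictly positive combination $c_1w^{k_1}+c_2w^{k_2}+c_3w^{k_3}=0$ with $c_1,c_2,c_3>0$.

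The main obstacle is precisely this non-root-of-unity case: the root-of-unity case and the reduction above are bookkeeping, but here one must genuinely locate powers of $w$ that surround the origin, which is where the density of $\{w^k\}$ (equivalently Kronecker's theorem, already invoked elsewhere in the paper) does the real work. Once suitable weights are in hand, one sets $g(z)=c_1z^{k_1}+c_2z^{k_2}+c_3z^{k_3}$ (or the cyclotomic polynomial above), defines $f_{\alpha,\beta}(z)=g(\overline{\beta}z)$, and verifies directly that $f_{\alpha,\beta}(\alpha)=g(w)=0$ while $f_{\alpha,\beta}(\beta)=g(1)=c_1+c_2+c_3=|f_{\alpha,\beta}|_1$, since $|c_k\overline{\beta}^{\,k}|=c_k$. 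This completes the construction.
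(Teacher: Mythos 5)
Your proof is correct and follows essentially the same route as the paper: reduce via the rotation $f_{\alpha,\beta}(z)=g(\overline{\beta}z)$ to finding a polynomial $g$ with nonnegative coefficients vanishing at $w=\alpha\overline{\beta}$, which amounts to placing $0$ in the convex hull of finitely many powers of $w$. The only difference is that the paper asserts this convex-hull fact as an ``elementary observation,'' whereas you actually justify it, splitting into the root-of-unity case ($1+z+\cdots+z^{q-1}$) and the non-root-of-unity case (density of $\{w^k\}$ yielding three powers surrounding the origin) --- a welcome filling-in of detail, not a different argument.
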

\begin{proof}
We will show first that there exist a polynomial $g$ with positive coefficients satisfying $g(\alpha\overline{\beta})=0$.
\\
Since $\alpha\neq\beta$ we have $\alpha\overline{\beta}\neq 1$. It is an elementary observation that there exists $N\in\mathbb{N}$ (which we choose minimal) with the property:
\begin{equation*}
0\in\mathrm{conv}\{(\alpha\overline{\beta})^{n}:n=0,1\ldots,N\}.
\end{equation*}
From the previous statement we deduce the existence of positive real numbers $a_{0},\ldots,a_{N}$ less or equal than $1$ satisfying
\begin{equation*}
0=a_{0}+a_{1}(\alpha\overline{\beta})+\ldots+a_{N}(\alpha\overline{\beta})^{N}.
\end{equation*}
Now, we define the polynomial $g$ as follows
\begin{equation*}
g(z):=a_{N}z^{N}+\ldots+a_{1}z+a_{0}, \text{ which gives $g(\alpha\overline{\beta})=0$ by the previous equation}.
\end{equation*}
Finally, let
\begin{equation*}
f_{\alpha,\beta}(z)=a_{0}+a_{1}\overline{\beta}z+\ldots+a_{N}(\overline{\beta})^{N}z^{N}.
\end{equation*}
Then $f_{\alpha,\beta}(\alpha)=g(\alpha\overline{\beta})=0$, also
\begin{equation*}
f_{\alpha,\beta}(\beta)=a_{0}+a_{1}+\ldots+a_{N}=|a_{0}|+|a_{1}|+\ldots+|a_{N}|
\end{equation*}
which finishes the proof.
\end{proof}
We are ready now to show that discrete measures are not reasonable.
\begin{tw}
Let $\mu\in M_{d}(\mathbb{T})$. Then $\mu$ is spectrally reasonable if and only if $\mu=c\delta_{0}$ for some $c\in\mathbb{C}$.
\end{tw}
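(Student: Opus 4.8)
The plan is to establish the equivalence by proving the easy implication directly and reducing the hard one to the already-proven statement that $\delta_{\alpha}\notin\mathscr{S}$ for every $\alpha\neq 0$. For the easy direction, if $\mu=c\delta_{0}$ then for every $\nu\in\mathscr{N}$ one has $\widehat{(c\delta_{0}+\nu)}(n)=c+\widehat{\nu}(n)$ and, since $c\delta_{0}$ is a scalar multiple of the unit, $\sigma(c\delta_{0}+\nu)=c+\sigma(\nu)$. As translation by $c$ is a homeomorphism of $\mathbb{C}$ it commutes with closure, so $\sigma(c\delta_{0}+\nu)=c+\overline{\widehat{\nu}(\mathbb{Z})}=\overline{\widehat{(c\delta_{0}+\nu)}(\mathbb{Z})}$; thus $c\delta_{0}+\nu\in\mathscr{N}$ and $c\delta_{0}\in\mathscr{S}$.

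For the converse I would argue that a discrete spectrally reasonable measure cannot carry any atom away from the origin. Write $\mu=\sum_{k\geq 0}a_{k}\delta_{\tau_{k}}$ with $\tau_{0}=0$, the $\tau_{k}$ pairwise distinct, $a_{k}\neq 0$ and $\sum_{k}|a_{k}|=\|\mu\|<\infty$. Since $a_{0}\delta_{0}\in\mathscr{S}$ by the easy direction, subtracting it shows $\mu_{0}:=\sum_{k\geq 1}a_{k}\delta_{\tau_{k}}\in\mathscr{S}$, so it suffices to prove $\mu_{0}=0$. Suppose not, and fix an atom $\tau_{1}\neq 0$ with $a_{1}\neq 0$. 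The key idea is to \emph{project} $\mu_{0}$ onto this single atom by averaging the modulation automorphisms, which act by $T_{m}\delta_{\tau}=e^{im\tau}\delta_{\tau}$ and preserve $\mathscr{S}$. For every $M\in\mathbb{N}$ the finite combination
\[
s_{M}:=\frac{1}{M}\sum_{m=0}^{M-1}e^{-im\tau_{1}}\,T_{m}\mu_{0}=\sum_{k\geq 1}a_{k}\Big(\frac{1}{M}\sum_{m=0}^{M-1}e^{im(\tau_{k}-\tau_{1})}\Big)\delta_{\tau_{k}}
\]
therefore belongs to $\mathscr{S}$.

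Next I would pass to the limit $M\to\infty$. The inner Ces\`aro average equals $1$ when $\tau_{k}=\tau_{1}$, while for $k\neq 1$ we have $e^{i(\tau_{k}-\tau_{1})}\neq 1$ and the average is bounded in modulus by $\tfrac{2}{M\,|e^{i(\tau_{k}-\tau_{1})}-1|}$, which tends to $0$. Hence the coefficient of each $\delta_{\tau_{k}}$ converges pointwise and is dominated by the summable sequence $(|a_{k}|)$, so dominated convergence in the total variation norm yields $\|s_{M}-a_{1}\delta_{\tau_{1}}\|\to 0$. Because $\mathscr{S}$ is closed (Theorem \ref{pod}) we obtain $a_{1}\delta_{\tau_{1}}\in\mathscr{S}$, and dividing by $a_{1}$ gives $\delta_{\tau_{1}}\in\mathscr{S}$ with $\tau_{1}\neq 0$. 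This contradicts the preceding theorems asserting $\delta_{\alpha}\notin\mathscr{S}$ for every $\alpha\neq 0$. Consequently $\mu_{0}=0$ and $\mu=a_{0}\delta_{0}$, which also subsumes the finite case of Proposition \ref{ds}.

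The one step deserving genuine care is the passage to the limit: everything hinges on the modulation averages converging to a \emph{single} atom in total variation, and this is exactly what the dominated convergence estimate above secures. An alternative isolation mechanism would be to feed the polynomial of the preceding lemma into the generators $T_{m}$, using $f(\alpha)=0$ to annihilate one atom and $f(\beta)=|f|_{1}$ to retain another; but that route forces one to control the behaviour of $f$ at all the remaining atoms, whereas the averaging argument sidesteps this difficulty entirely.
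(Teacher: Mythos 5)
Your proof is correct, and it takes a genuinely different route from the paper's. The paper first disposes of finite sums separately (Proposition \ref{ds}, via a minimality argument with the modulations $T_m$), and then, for an infinite sum normalized so that the atom at $\tau_1$ has coefficient $1$ and $|a_2|\geq|a_3|\geq\cdots$, invokes a special polynomial lemma: for distinct unimodular $\alpha,\beta$ there is a polynomial $f_{\alpha,\beta}$ with $f_{\alpha,\beta}(\alpha)=0$ and $f_{\alpha,\beta}(\beta)=|f_{\alpha,\beta}|_1$, built from the observation that $0\in\mathrm{conv}\{(\alpha\overline{\beta})^n\}$. Feeding these polynomials into the modulations annihilates the atoms $\tau_2,\ldots,\tau_{N-1}$ exactly, one at a time, while preserving the atom at $\tau_1$ with full weight after normalization; since each factor $f_j(e^{i\tau_n})/|f_j|_1$ has modulus at most $1$, the remaining tail has norm at most $\sum_{n\geq N}|a_n|\rightarrow 0$, and closedness of $\mathscr{S}$ forces $\delta_{\tau_1}\in\mathscr{S}$, a contradiction. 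Your Fej\'er-type average $s_M=\frac{1}{M}\sum_{m=0}^{M-1}e^{-im\tau_1}T_m\mu_0$ accomplishes the same isolation in a single stroke: each $s_M$ lies in $\mathscr{S}$ by linearity and $T_m$-invariance of $\mathscr{S}$, and your dominated-convergence estimate is legitimate --- the atoms are distinct, so the total variation norm is the $\ell^1$ norm of the coefficients, each Ces\`aro coefficient is bounded by $\min\bigl(1,\,2/(M|e^{i(\tau_k-\tau_1)}-1|)\bigr)$, and the majorant $(|a_k|)$ is summable --- giving $\|s_M-a_1\delta_{\tau_1}\|\rightarrow 0$ and hence $\delta_{\tau_1}\in\mathscr{S}$ by closedness, the same contradiction. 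Your route avoids the polynomial lemma, the coefficient ordering, and, as you note, subsumes the finite case of Proposition \ref{ds}, so it is shorter and more uniform; what the paper's route buys is the polynomial lemma itself, which the authors flag as being of independent interest, and exact rather than asymptotic annihilation of each atom. Both arguments ultimately rest on the same two pillars established earlier in the paper: $\mathscr{S}$ is norm-closed (Theorem \ref{pod}) and $\delta_{\alpha}\notin\mathscr{S}$ for every $\alpha\neq 0$ (the rational and irrational cases combined).
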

\begin{proof}
Suppose on the contrary that there exists some $\mu\in M_{d}(\mathbb{T})\cap\mathscr{S}$ which is not of the form $c\delta_{0}$. From Proposition $\ref{ds}$ we obtain that $\mu$ is an infinite sum of Dirac deltas. Clearly, without losing the generality we can assume that $\mu$ has the following representation
\begin{equation*}
\mu=\delta_{\tau_{1}}+\sum_{n=2}^{\infty}a_{n}\delta_{t_{n}},
\end{equation*}
where $\tau_{n}\neq\tau_{l}$ for $n\neq l$, $a_{n},\tau_{n}\neq 0$ for all $n\in\mathbb{N}$, $|a_{2}|\geq |a_{3}|\geq\ldots$. Let $f_{e^{i\tau_{2}},e^{i\tau_{1}}}=:f_{1}$ be the polynomial from the previous lemma, i.e. satisfying $f_{1}(e^{i\tau_{2}})=0$ and $f_{1}(e^{i\tau_{1}})=|f_{1}|_{1}$. Then, we calculate easily for $f_{1}(z)=b_{0}+b_{1}z+\ldots+b_{k}z^{k}$ and any $\tau\in\mathbb{T}$:
\begin{equation*}
b_{0}\delta_{\tau}+b_{1}T_{-1}(\delta_{\tau})+\ldots+b_{k}T_{-k}(\delta_{\tau})=b_{0}\delta_{\tau}+b_{1}e^{i\tau}+\ldots+b_{k}e^{ik\tau}\delta_{\tau}=f_{1}(e^{i\tau})\delta_{\tau}.
\end{equation*}
Now, we have
\begin{gather*}
\nu:=b_{0}\mu+b_{1}T_{-1}(\mu)+\ldots+b_{k}T_{-k}(\mu)=\\
=f_{1}(e^{i\tau_{1}})\delta_{\tau_{1}}+a_{2}f_{1}(e^{i\tau_{2}})\delta_{\tau_{2}}+\sum_{n=3}^{\infty}a_{n}f_{1}(e^{i\tau_{n}})\delta_{\tau_{n}}=\\
=|f_{1}|_{1}\delta_{\tau_{1}}+\sum_{n=3}^{\infty}a_{n}f_{1}(e^{i\tau_{n}})\delta_{\tau_{n}}.
\end{gather*}
Moreover, $\nu\in\mathscr{S}$ which implies
\begin{equation*}
\frac{\nu}{|f_{1}|_{1}}=\delta_{\tau_{1}}+\sum_{n=3}^{\infty}a_{n}\frac{f_{1}(e^{i\tau_{n}})}{|f_{1}|_{1}}\delta_{\tau_{n}}\in\mathscr{S}.
\end{equation*}
Obviously, we can repeat this procedure to obtain
\begin{equation*}
\delta_{\tau_{1}}+\sum_{n=N}^{\infty}a_{n}\frac{f_{1}(e^{i\tau_{n}})}{|f_{1}|_{1}}\cdot\ldots\cdot\frac{f_{N-2}(e^{i\tau_{n}})}{|f_{N-2}|_{1}}\delta_{\tau_{n}}\in\mathscr{S}
\text{ for very $N\in\mathbb{N}$}.
\end{equation*}
However, since $\delta_{\tau_{1}}\notin\mathscr{S}$ and the set $\mathscr{S}$ is closed in $M(\mathbb{T})$ there exists $\varepsilon>0$ such that for every measure $\rho$ with $\|\rho\|<\varepsilon$ we have $\delta_{\tau_{1}}+\rho\notin\mathscr{S}$. This leads to a contradiction because we can choose $N\in\mathbb{T}$ large enough to obtain (to prove the first inequality we use the observation $|f_{j}(e^{i\tau_{n}})|\leq |f_{j}|_{1}$ for $j,n\in\mathbb{N}$)
\begin{equation*}
\|\sum_{n=N}^{\infty}a_{n}\frac{f_{1}(e^{i\tau_{n}})}{|f_{1}|_{1}}\cdot\ldots\cdot\frac{f_{N-2}(e^{i\tau_{n}})}{|f_{N-2}|_{1}}\delta_{\tau_{n}}\|\leq \sum_{n=N}^{\infty}|a_{n}|<\varepsilon.
\end{equation*}
\end{proof}
\section{Final Remarks}
\begin{enumerate}
  \item The results of sections 1 and 2 hold if $\mathbb{T}$ is replaced by any locally compact abelian group (proofs remains unchanged, essentially).
  \item The inclusion $\mathscr{C}\subset \mathscr{S}$ relies only on Zafran's theorem which is true for any compact abelian group but cannot be extended to non-compact case.
  \item The authors do not know if there exists any reasonable measure which is not highly reasonable. It seems quite reasonable to us that $\mathscr{S}$ would be just a unitization of the Zafran's ideal $\mathscr{C}$.
  \item It is very probable that for locally compact but non-compact $G$ the algebra $\mathscr{S}$ is trivial because of the result of O. Hatori ($\cite{h}$) which asserts that $\mathscr{N}(G)+L^{1}(G)=M(G)$ in this case which implies that even absolutely continuous measures need not be reasonable.
  \item The extension of results from Section 4 to $n$-dimensional torus seems to be only a technical challenge. However, for other compact groups (in particular, the torsion ones) this extension does not look straightforward.
\end{enumerate}


\begin{thebibliography}{HD}
\normalsize
\baselineskip=17pt
\bibitem [G]{graham} C.C. Graham: \textit{A Riesz Product Proof of the Wiener-Pitt Theorem}, Proc. Amer. Math. Soc., vol. 44, no. 2, pp. 312-314, 1974
\bibitem [H]{h} O. Hatori: \textit{Measures with Natural Spectra on Locally Compact Abelian Groups}, Proc. Amer. Math. Soc., vol. 126, pp. 2351-2353, 1998.
\bibitem [HS]{hs} O. Hatori, E. Sato: \textit{Decompositions of Measures on Compact Abelian Groups}, Tokyo J. of Math., vol. 24, no. 1, pp. 13-18, 2001.
\bibitem [Kan]{ek} E. Kaniuth: \textit{A Course in Commutative Banach Algebras}, Springer New Yor, 2009.
\bibitem [Kat]{katz} Y. Katznelson: \textit{An Introduction to Harmonic Analysis}, Cambridge Univeristy Press, 2004.
\bibitem [O]{o} P. Ohrysko: \textit{An elementary proof of the decomposition of measures on the circle group}, Colloquium Math., vol.150, no.1, 2015.
\bibitem [OW]{ow} P. Ohrysko, M. Wojciechowski: \textit{On the relathionships between Fourier - Stieltjes coefficients and spectra of measures}, Studia Math, vol. 221, pp. 117-140, 2014.
\bibitem [P]{p} F. Parreau: \textit{Measures with real spectra}, Inventiones Math., vol. 98, no. 2, pp. 311-330, 1989.
\bibitem [R]{r} W. Rudin: \textit{Fourier Analysis on Groups}, Wiley Classics Library, 1990.
\bibitem [S]{schreider} Y. A. \v{S}reider: \textit{The structure of maximal ideals in rings of measures with convolution}, Mat. Sbornik, vol. 27, pp. 297-318, 1950; Amer. Math. Soc. Trans., no. 81, 1953.
\bibitem [W]{wil} J.H. Williamson: \textit{A theorem on algebras of measures on topological groups}, Proc. Edinburh Philos. Soc., vol. 11, pp. 195-206, 1959.
\bibitem [WP]{wp} N. Wiener, H.R. Pitt: \textit{Absolutely Convergent Fourier-Stieltjes Transforms}, Duke Math. J., vol. 4, no. 2, pp. 420-436, 1938.
\bibitem [Z]{Zafran} M. Zafran: \textit{On Spectra of Multipliers}, Pacific Journal of Mathematics, vol. 47, no. 2, 1973.
\bibitem [Ż]{z} W. Żelazko: \textit{Banach Algebras}, Elsevier Science Ltd and PWN, February 1973.
\end{thebibliography}
\end{document}